\journal{Math. Inequal. Appl. (accepted 28/Sept/2023).}
\newtheorem{theorem}{Theorem}[section]
\newtheorem{lemma}[theorem]{Lemma}
\theoremstyle{definition}
\newtheorem{definition}[theorem]{Definition}
\newtheorem{example}[theorem]{Example}
\newtheorem{remark}[theorem]{Remark}
\numberwithin{equation}{section}
\begin{document}

\begin{frontmatter}

\title{Discrete Opial type inequalities for interval-valued functions}

\author[rvt1]{Dafang Zhao}
\ead{dafangzhao@163.com}

\author[rvt1]{Xuexiao You}
\ead{youxuexiao@126.com}

\author[focal]{Delfim F. M. Torres\corref{cor1}}
\ead{delfim@ua.pt}

\cortext[cor1]{Corresponding author.}

\address[rvt1]{School of Mathematics and Statistics,
Hubei Normal University, Huangshi 435002, P. R. China.}

\address[focal]{Center for Research and Development in Mathematics and Applications (CIDMA),\\
Department of Mathematics, University of Aveiro, 3810-193 Aveiro, Portugal.\\[0.3cm]
(Accepted 28/Sept/2023 to Math. Inequal. Appl.)}

\date{}


\begin{abstract}
We introduce the forward (backward) gH-difference operator of interval sequences,
and establish some new discrete Opial type inequalities for interval-valued functions.
Further, we obtain generalizations of classical discrete Opial type inequalities.
Some examples are presented to illustrate our results.
\end{abstract}

\begin{keyword}
discrete Opial type inequalities; gH-difference operators; interval-valued functions.

\MSC[2010] 26D15 \sep 26E50 \sep 65G30.
\end{keyword}

\end{frontmatter}


\section{Introduction}

The theory of inequalities has a long history but, from the applicative point of view, it fell
into neglect for hundreds of years because of lack of applications to other branch of mathematics
as well as other sciences, such as physics and engineering. Only in 1934 did Hardy, Littlewood
and P\'{o}lya transformed the field of inequalities from a collection of isolated formulas into
a systematic discipline \cite{H34}.  After that, an enormous amount of effort has been devoted
to the discovery of new types of inequalities and to applications of inequalities \cite{MR4260282}.

It is known that many physical problems in various applications are governed by finite
difference equations. Moreover, discrete inequalities play an important role in the continuing
development of the theory of difference equations. This importance seems to have increased
considerably during the past decades. It has attracted the attention of a large number
of researchers, stimulated new research directions, and influenced various aspects
of difference equations and applications. Among the many types of inequalities,
those associated with the names of Jensen \cite{CD11,D15,D18}, Hilbert \cite{K14,Y15},
Wirtinger \cite{AC98,AP96,FTT55}, Chebyshev \cite{P08,ZA20}, Gronwall--Bellman \cite{F15,Q16}
and Opial \cite{A16,BM15,H18,L68,P87,P91} have deep roots and made a great impact on various
branches of mathematics. The development of discrete inequalities resulted in a renewal
of interest in the field and has attracted interest from more researchers
\cite{B57,C09,CR06,D04,H20,M08,M10,M07,ML03,SA21,SKP19,SK19,SS11}.

More recently, some of classical inequalities have been extended to set-valued functions
by Nikodem et al. \cite{NSS}, \v{S}trboja et al. \cite{S2013}, and Zhang et al. \cite{ZGC21,ZG21},
especially to interval-valued functions by Chalco-Cano et al. \cite{C12,CL15}, Costa et al.
\cite{C17,CF,C19}, Flores-Franuli\v{c} et al. \cite{FC}, Rom\'{a}n-Flores et al. \cite{R18},
and Zhao et al. \cite{ZA18,ZA20,ZA21,ZY19}. The present article is, in some sense, a continuation
of the previous work \cite{ZA21}. Here, we establish some new discrete inequalities of Opial type
involving sequences of intervals and their forward (backward) difference operator. Furthermore, our
present results can be considered as tools for further research in interval difference equations
and inequalities for interval-valued functions, among others.

The paper is organized as follows. Section~\ref{sec:2} contains some necessary preliminaries.
In Section~\ref{sec:3}, we present some new interval Opial type inequalities involving the
backward gH-difference operator, and present some examples to illustrate our theorems.
In Section~\ref{sec:4}, some new discrete Opial type inequalities, involving the forward
gH-difference operator, are given. Finally, in the concluding Section~\ref{sec:5}, 
we summarize our results and outline some possible future work directions.


\section{Preliminaries}
\label{sec:2}

We begin by recalling some basic notations, definitions, and results of interval analysis.
We define an interval $u$ by
$$
u=[\underline{u},\overline{u}]
=\{t\in\mathbb{R}|\ \underline{u}\leq t\leq\overline{u}\}.
$$
We write $len(u)=\overline{u}-\underline{u}$.
If $len(u)=0$, then $u$ is called a degenerate interval.
The set of all intervals of $\mathbb{R}$ is denoted by $\mathbb{R}_{\mathcal{I}}$.
For $\lambda\in \mathbb{R}$ and $u\in\mathbb{R}_{\mathcal{I}}$, $\lambda u$ is defined by
\begin{equation*}
\lambda[\underline{u},\overline{u}]=
\begin{cases}
[\lambda\underline{u},\lambda\overline{u}]& \text{if $\lambda\geq0$},\\
[\lambda\overline{u},\lambda\underline{u}]& \text{if $\lambda<0$}.
\end{cases}
\end{equation*}
For $u=[\underline{u},\overline{u}]$ and $v=[\underline{v},\overline{v}]$,
the four arithmetic operators (+,-,$\cdot$,/) are defined by
$$
u+v=[\underline{u}+\underline{v},\overline{u}+\overline{v}],
$$
$$
u-v=[\underline{u}-\overline{v},\overline{u}-\underline{v}],
$$
$$
u\cdot v=\big[\min\{\underline{u}\underline{v},\underline{u}\overline{v},
\overline{u}\underline{v},\overline{u}\overline{v}\},
\max\{\underline{u}\underline{v},\underline{u}\overline{v},
\overline{u}\underline{v},\overline{u}\overline{v}\}\big],
$$
\begin{equation*}
u/v=\big[\min\{\underline{u}/\underline{v},\underline{u}/\overline{v},
\overline{u}/\underline{v},\overline{u}/\overline{v}\},
\max\{\underline{u}/\underline{v},\underline{u}/\overline{v},
\overline{u}/\underline{v},\overline{u}/\overline{v}\}\big],
{\rm where}\ \  0\notin[\underline{v},\overline{v}].
\end{equation*}
Note that $\mathbb{R}_{\mathcal{I}}$ with the above operations
(i.e., the Minkowski addition and the scalar multiplication)
is \emph{not} a linear space since an interval does not have
inverse element and, therefore, the subtraction does not have adequate properties.
For example, when subtracting two intervals $u$ and $v$,
the width of the result is the sum of the widths of $u$ and $v$, i.e.,
$$
len(u-v)=len(u)+len(v).
$$
To partially overcome this situation, Hukuhara \cite{H67}
introduced the following H-difference:
$$
u\ominus v=w\Leftrightarrow u=v+w.
$$
Unfortunately, the H-difference does not always exist for any $u$ and $v$.

In \cite{S10}, Stefanini introduced the gH-difference as follows:
\begin{equation*}
u\bm{\ominus_{g}}v=w
\Leftrightarrow
\begin{cases}
\ \ \ \ (a)\ u=v+w,\\
or \ (b)\ v=u+(-1)w.
\end{cases}
\end{equation*}
The gH-difference always exists for any $u$ and $v$. We also have
$$
u\bm{\ominus_{g}}v=
\big[\min\{\underline{u}-\underline{v}, \overline{u}-\overline{v}\},
\max\{\underline{u}-\underline{v}, \overline{u}-\overline{v}\}\big].
$$

The Hausdorff distance between $u$ and $v$ is defined by
$$
d(u,v)
=\max\Big\{|\underline{u}-\underline{v}|,
|\overline{u}-\overline{v}|\Big\}.
$$
Then, $(\mathbb{R}_{\mathcal{I}}, d)$ is a complete metric space. Note that
$(\mathbb{R}_{\mathcal{I}}, +, \cdot)$ is a quasi-linear space (see \cite{C19})
equipped with the quasi-norm  $\|\cdot\|$, which is given by
$$
\|u\|=d(u,[0,0])=d([\underline{u},\overline{u}],[0,0])
=\max\{|\underline{u}|,|\overline{u}|\}
$$
for all $u\in \mathbb{R}_{\mathcal{I}}$.

On $[a,b]$, $u_{i}$ is called increasing if and only if
$\underline{u_{i}}$ and $\overline{u_{i}}$ are increasing;
$u_{i}$ and $v_{i}$ are synchronous (asynchronous) monotone
if they have the same (opposite) monotonicity;
$u_{i}$ is $\mu$-increasing if $len(u_{i})$ is increasing.
One defines $u^{\lambda}$ by
$$
u^{\lambda}=\{t^{\lambda}|\ t\in[\underline{u},\overline{u}]\}.
$$

For convenience, we now recall the classical Opial's inequality:

\begin{theorem}[continuous Opial inequality \cite{O60}]
\label{thm 2.1}
Let $F\in C^{1}[0,h]$, $F(0)=F(h)=0$ and $F(t)>0$ for $t\in(0,h)$. Then,
\begin{equation}
\label{1.1}
\int^{h}_{0}|F(t)F'(t)|dt
\leq
\frac{h}{4}\int^{h}_{0}\big(F'(t)\big)^{2}dt,
\end{equation}
where $\frac{h}{4}$ is the best possible.
\end{theorem}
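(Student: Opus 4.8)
The plan is to reduce the global estimate on $[0,h]$ to two local estimates, one on $[0,h/2]$ and one on $[h/2,h]$, each obtained by a one–line integration combined with the Cauchy--Schwarz inequality.

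First I would introduce the nondecreasing auxiliary function $a(t)=\int_{0}^{t}|F'(s)|\,ds$. Since $F(0)=0$, we have $|F(t)|=\bigl|\int_{0}^{t}F'(s)\,ds\bigr|\le a(t)$ for every $t\in[0,h]$, while $a'(t)=|F'(t)|$ (here even in the classical sense, because $F'$ is continuous). Hence, on $[0,h/2]$,
\[
\int_{0}^{h/2}|F(t)F'(t)|\,dt
\le\int_{0}^{h/2}a(t)a'(t)\,dt
=\tfrac12\,a(h/2)^{2}.
\]
Applying the Cauchy--Schwarz inequality to $a(h/2)=\int_{0}^{h/2}1\cdot|F'(s)|\,ds$ gives $a(h/2)^{2}\le\frac{h}{2}\int_{0}^{h/2}\bigl(F'(s)\bigr)^{2}\,ds$, so that
\[
\int_{0}^{h/2}|F(t)F'(t)|\,dt\le\frac{h}{4}\int_{0}^{h/2}\bigl(F'(t)\bigr)^{2}\,dt.
\]
Symmetrically, working with $b(t)=\int_{t}^{h}|F'(s)|\,ds$ and using $F(h)=0$, so that $|F(t)|\le b(t)$ and $b'(t)=-|F'(t)|$, the same computation on $[h/2,h]$ yields $\int_{h/2}^{h}|F(t)F'(t)|\,dt\le\frac{h}{4}\int_{h/2}^{h}\bigl(F'(t)\bigr)^{2}\,dt$. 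Adding the two inequalities produces \eqref{1.1}.

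For the optimality of the constant, I would test the tent profile $F(t)=t$ on $[0,h/2]$ and $F(t)=h-t$ on $[h/2,h]$ (smoothed slightly near $h/2$ so as to lie in $C^{1}$): both sides of \eqref{1.1} equal $h^{2}/4$ up to an error that vanishes as the smoothing shrinks, so no constant smaller than $h/4$ is admissible. The only point requiring care is the choice of the one–sided primitive on each half: on $[0,h/2]$ one must bound $|F|$ by the primitive $a$ of $|F'|$ measured from $0$, and on $[h/2,h]$ by the primitive $b$ measured from $h$; using the wrong one-sided primitive on a half-interval would only deliver the weaker factor $h/2$. Beyond this bookkeeping I do not anticipate any genuine obstacle, since $F$ and $F'$ are continuous on a compact interval and all integrals involved are elementary.
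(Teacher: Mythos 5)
Your argument is correct: it is the standard Olech-type proof of Opial's inequality (split at the midpoint, bound $|F|$ on each half by the one-sided primitive of $|F'|$ taken from the nearest endpoint where $F$ vanishes, integrate $aa'$ exactly, and finish with Cauchy--Schwarz), and all the individual steps check out, including the identity $\int_0^{h/2}aa'\,dt=\tfrac12 a(h/2)^2$ and the sharpness analysis via the smoothed tent function. Note that the paper itself does not prove Theorem~\ref{thm 2.1}; it is quoted as a classical result with a citation to Opial's original article, so there is no in-paper proof to compare against. It is worth remarking that your proof never uses the hypothesis $F(t)>0$ on $(0,h)$, so you have in fact established the slightly stronger form of the inequality (valid for arbitrary sign of $F$) that is usually attributed to Olech; this is consistent with, and subsumes, the statement as given.
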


A discrete analogue of Theorem~\ref{thm 2.1} is the following:

\begin{theorem}[discrete Opial inequality \cite{AP95}]
\label{thm 2.2}
Let $\{u_{i}\}_{i=0}^{n}$ be a sequence of numbers with $u_{0}=0$ and $u_{n}=0$. Then,
\begin{equation}\label{1.2}
\sum^{n-1}_{i=1}|u_{i}\Delta u_{i}|
\leq
\frac{1}{2}\Big[\frac{n+1}{2}\Big]\sum^{n-1}_{i=0}|\Delta u_{i}|^{2},
\end{equation}
where $\Delta$ is the forward difference operator
and  $[\cdot]$ is the greatest integer function.
\end{theorem}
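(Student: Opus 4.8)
The plan is to split the sum at the midpoint and treat the two halves by exploiting the two boundary conditions separately, each via a telescoping identity followed by the Cauchy--Schwarz inequality. First I would introduce the nonnegative ``discrete primitives'' $v_i:=\sum_{j=0}^{i-1}|\Delta u_j|$ and $w_i:=\sum_{j=i}^{n-1}|\Delta u_j|$. Since $u_0=0$ forces $u_i=\sum_{j=0}^{i-1}\Delta u_j$ and $u_n=0$ forces $u_i=-\sum_{j=i}^{n-1}\Delta u_j$, one has the pointwise estimates $|u_i|\le v_i$ and $|u_i|\le w_i$; moreover $(v_i)$ is nondecreasing with $v_1=|\Delta u_0|$ and $v_{i+1}-v_i=|\Delta u_i|$, while $(w_i)$ is nonincreasing with $w_n=0$ and $w_i-w_{i+1}=|\Delta u_i|$.

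Next I would fix $m:=[n/2]$ and, assuming $n\ge2$ (the case $n\le1$ being trivial, since the left-hand sum is then empty), write
\[
\sum_{i=1}^{n-1}|u_i\Delta u_i|
=\sum_{i=1}^{m}|u_i||\Delta u_i|+\sum_{i=m+1}^{n-1}|u_i||\Delta u_i|.
\]
On the first block I would use $|u_i|\le v_i$ and the exact identity $v_{i+1}^{2}-v_i^{2}=2v_i|\Delta u_i|+|\Delta u_i|^{2}$; summing it from $i=1$ to $m$ telescopes to $v_{m+1}^{2}-|\Delta u_0|^{2}$, whence
\[
\sum_{i=1}^{m}|u_i||\Delta u_i|
\le\frac12\Bigl(v_{m+1}^{2}-\sum_{i=0}^{m}|\Delta u_i|^{2}\Bigr).
\]
Since $v_{m+1}=\sum_{j=0}^{m}|\Delta u_j|$ is a sum of $m+1$ terms, Cauchy--Schwarz gives $v_{m+1}^{2}\le(m+1)\sum_{i=0}^{m}|\Delta u_i|^{2}$, and after subtracting $\sum_{i=0}^{m}|\Delta u_i|^{2}$ this leaves exactly $\frac{m}{2}\sum_{i=0}^{m}|\Delta u_i|^{2}$. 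The second block is symmetric: from $2w_i|\Delta u_i|=(w_i^{2}-w_{i+1}^{2})+|\Delta u_i|^{2}$ one telescopes (using $w_n=0$) to get $\sum_{i=m+1}^{n-1}|u_i||\Delta u_i|\le\frac12 w_{m+1}^{2}+\frac12\sum_{i=m+1}^{n-1}|\Delta u_i|^{2}$, and since $w_{m+1}$ is a sum of $n-1-m$ terms, the bound $w_{m+1}^{2}\le(n-1-m)\sum_{i=m+1}^{n-1}|\Delta u_i|^{2}$ together with the remaining half-sum yields $\frac{n-m}{2}\sum_{i=m+1}^{n-1}|\Delta u_i|^{2}$.

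Adding the two estimates, the squared differences appearing are complementary (indices $0,\dots,m$ against $m+1,\dots,n-1$), so the total is at most $\frac12\max\{m,\,n-m\}\sum_{i=0}^{n-1}|\Delta u_i|^{2}$. It remains to invoke the elementary identity $n-[n/2]=[(n+1)/2]$ together with $[n/2]\le[(n+1)/2]$, so that $\max\{m,n-m\}=[(n+1)/2]$, which is the stated constant. I expect the main obstacle to be purely the bookkeeping of the lower-order terms $\sum|\Delta u_i|^{2}$ produced by the discrete analogue of the chain rule ($v_{i+1}^{2}-v_i^{2}\neq2v_i|\Delta u_i|$): these must be carried along precisely, since a cruder telescoping such as $v_i|\Delta u_i|\le\frac12(v_{i+1}^{2}-v_i^{2})$ would inflate the constant, and the choice of split point $m=[n/2]$ (rather than a neighbouring integer) is exactly what balances the two block-constants $m$ and $n-m$ against $[(n+1)/2]$. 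Finally, checking the case $n=2$, where $u_1$ is free and the inequality becomes an equality, shows that the constant cannot be lowered.
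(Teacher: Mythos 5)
Your argument is correct, and every step checks out: the telescoping identities $v_{i+1}^{2}-v_i^{2}=2v_i|\Delta u_i|+|\Delta u_i|^{2}$ and $w_i^{2}-w_{i+1}^{2}=2w_i|\Delta u_i|-|\Delta u_i|^{2}$ are right, the boundary terms $v_1^{2}=|\Delta u_0|^{2}$ and $w_n=0$ are handled properly, and the arithmetic $n-[n/2]=[(n+1)/2]\geq[n/2]$ delivers exactly the stated constant. Note that the paper does not prove Theorem~\ref{thm 2.2} at all --- it is quoted from \cite{AP95} --- so there is no in-text proof to compare against; your route is essentially the classical one, and it is also structurally the same strategy the authors use for their interval-valued generalization (Theorem~\ref{thm 3.5}): split the sum at $[n/2]$, bound $|u_i|$ by the accumulated differences from the nearer endpoint on each half (their Lemmas~\ref{lem 3.01} and \ref{lem 3.02} with $\lambda_1=\lambda_2=1$ play the role of your two one-sided block estimates), and add. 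The only difference is that the paper obtains the one-sided estimates by induction plus Young's and H\"older's inequalities for general exponents $\lambda_1,\lambda_2\geq1$, whereas your direct telescoping-plus-Cauchy--Schwarz argument is cleaner for the quadratic case and, as you observe, keeps the lower-order terms that make the constant $\frac12[\frac{n+1}{2}]$ sharp at $n=2$.
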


Many generalizations of Theorem~\ref{thm 2.2} are available in the literature:
see, e.g., \cite{A16,BM15,H18}. In Sections~\ref{sec:3} and \ref{sec:4}, we
give several extensions of Theorem~\ref{thm 2.2} for sequences of intervals.


\section{Opial type inequalities involving the backward/nabla gH-difference operator}
\label{sec:3}

\begin{definition}
\label{defn3.1}
Let $\{u_{i}\}$ be a sequence of intervals. We define the forward (delta)
gH-difference operator $\Delta u$ by
$$
\Delta u_{i}=u_{i+1}\bm{\ominus_{g}}u_{i}.
$$
Similarly, we define the backward (nabla) gH-difference operator
$\nabla u$ by
$$
\nabla u_{i}=u_{i}\bm{\ominus_{g}}u_{i-1}.
$$
\end{definition}

\begin{remark}
\label{rmk 3}
Note that if $\{u_{i}\}$ is a sequence of degenerate intervals,
then the forward (backward) gH-difference operator reduces
to the classical forward (backward) difference operator.
\end{remark}

Lemma~\ref{lem 3.1} has been obtained by Lee in \cite{L68}.
Here we give a new and more direct proof.

\begin{lemma}[cf. \cite{L68}]
\label{lem 3.1}
Let $\{u_{i}\}_{i=1}^{n}$ be a non-decreasing sequence of non-negative real numbers,
$u_{0}=0$, and $\lambda_{1},\ \lambda_{2}\geq1$. Then,
\begin{equation}
\label{eq3.1}
\sum^{n}_{i=1}u_{i}^{\lambda_{1}}\big(\nabla u_{i}\big)^{\lambda_{2}}
\leq
\frac{\lambda_{2}(n+1)^{\lambda_{1}}}{\lambda_{1}+\lambda_{2}}
\sum^{n}_{i=1}\big(\nabla u_{i}\big)^{\lambda_{1}+\lambda_{2}}.
\end{equation}
\end{lemma}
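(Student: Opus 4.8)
The plan is to reduce the inequality to a telescoping estimate driven by the elementary observation that, since $\{u_i\}$ is non-decreasing with $u_0=0$, we can write $u_i=\sum_{k=1}^{i}\nabla u_k$ with all increments $\nabla u_k\ge 0$. First I would record the pointwise bound $u_i\le i\,\max_{1\le k\le i}\nabla u_k$ is too weak; instead the right device is a ``discrete chain rule'' type inequality: for $\lambda\ge 1$ and $a\ge b\ge 0$ one has $a^{\lambda}-b^{\lambda}\ge \lambda\,b^{\lambda-1}(a-b)$, equivalently, applied with $a=u_i$, $b=u_{i-1}$,
\begin{equation*}
u_i^{\lambda}-u_{i-1}^{\lambda}\ge \lambda\, u_{i-1}^{\lambda-1}\,\nabla u_i .
\end{equation*}
Summing a variant of this will let me control $\sum u_i^{\lambda_1}(\nabla u_i)^{\lambda_2}$ by a telescoping sum of $u_i^{\lambda_1+\lambda_2}$-type quantities, after which the factor $(n+1)^{\lambda_1}$ enters by bounding $u_n^{\lambda_1}$ crudely in terms of the increments.

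More precisely, the second step is to apply Hölder's inequality in the form that separates the exponent $\lambda_1+\lambda_2$: writing $p=\frac{\lambda_1+\lambda_2}{\lambda_1}$ and $q=\frac{\lambda_1+\lambda_2}{\lambda_2}$ (so $\tfrac1p+\tfrac1q=1$), one gets
\begin{equation*}
\sum_{i=1}^{n}u_i^{\lambda_1}(\nabla u_i)^{\lambda_2}
\le\Big(\sum_{i=1}^{n}u_i^{\lambda_1+\lambda_2}v_i\Big)^{1/p}\Big(\sum_{i=1}^{n}(\nabla u_i)^{\lambda_1+\lambda_2}w_i\Big)^{1/q}
\end{equation*}
for suitable weights; but the cleaner route, which I expect to be the one carried out, avoids Hölder and instead uses convexity directly. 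Namely, set $S_i=\sum_{k=1}^{i}(\nabla u_k)^{\lambda_1+\lambda_2}$ and show by induction on $n$ that the left-hand side is at most $\frac{\lambda_2(n+1)^{\lambda_1}}{\lambda_1+\lambda_2}S_n$. The inductive step requires comparing $u_n^{\lambda_1}(\nabla u_n)^{\lambda_2}$ with the increment $\frac{\lambda_2}{\lambda_1+\lambda_2}\big[(n+1)^{\lambda_1}S_n-n^{\lambda_1}S_{n-1}\big]$, and here the inequality $u_n\le n\cdot\big(\frac{1}{n}\sum_{k=1}^n(\nabla u_k)^{\lambda_1+\lambda_2}\big)^{1/(\lambda_1+\lambda_2)}$ — itself a consequence of the power-mean inequality applied to the increments — does the work, together with $\nabla u_n\le S_n^{1/(\lambda_1+\lambda_2)}$.

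The main obstacle will be handling the two free exponents $\lambda_1,\lambda_2\ge 1$ simultaneously and getting exactly the constant $\frac{\lambda_2(n+1)^{\lambda_1}}{\lambda_1+\lambda_2}$ rather than something weaker; this is where the precise form of the power-mean/convexity estimate matters, and a naive application of Hölder will overshoot. I would therefore first treat the base case $n=1$ (where the claim reads $u_1^{\lambda_1}(\nabla u_1)^{\lambda_2}\le \frac{\lambda_2\cdot 2^{\lambda_1}}{\lambda_1+\lambda_2}(\nabla u_1)^{\lambda_1+\lambda_2}$, true since $u_1=\nabla u_1$ and $2^{\lambda_1}\ge \lambda_1+1>\frac{\lambda_1+\lambda_2}{\lambda_2}$ is false in general — so actually the base case already forces a more careful argument, confirming that the honest proof does not go through term-by-term but must exploit cancellation across the whole sum), and then set up the induction so that the delicate constant emerges from a single scalar inequality of the shape $(n+1)^{\lambda_1}x^{\lambda_1+\lambda_2}+\frac{\lambda_1+\lambda_2}{\lambda_2}n^{\lambda_1}y\le \cdots$ which I would verify by calculus in one variable.
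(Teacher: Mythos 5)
Your overall strategy --- writing $u_i=\sum_{k=1}^{i}\nabla u_k$, inducting on $n$, and using a power-mean inequality on the increments --- is the same as the paper's, but the plan has a genuine gap exactly where the work has to be done, namely the inductive step. With $S_{i}=\sum_{k=1}^{i}(\nabla u_{k})^{\lambda_{1}+\lambda_{2}}$, that step requires
\begin{equation*}
u_{n}^{\lambda_{1}}\big(\nabla u_{n}\big)^{\lambda_{2}}
\leq\frac{\lambda_{2}}{\lambda_{1}+\lambda_{2}}
\Big[(n+1)^{\lambda_{1}}S_{n}-n^{\lambda_{1}}S_{n-1}\Big],
\end{equation*}
and you propose to get it from $u_{n}\leq n^{1-1/(\lambda_{1}+\lambda_{2})}S_{n}^{1/(\lambda_{1}+\lambda_{2})}$ ``together with'' $\nabla u_{n}\leq S_{n}^{1/(\lambda_{1}+\lambda_{2})}$. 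Multiplying those two bounds yields $u_{n}^{\lambda_{1}}(\nabla u_{n})^{\lambda_{2}}\leq n^{\lambda_{1}-\lambda_{1}/(\lambda_{1}+\lambda_{2})}S_{n}$, which does \emph{not} fit under the target: when $\nabla u_{n}$ is small the right-hand side above is only of order $n^{\lambda_{1}-1}S_{n}$, while your bound is of order $n^{\lambda_{1}-\lambda_{1}/(\lambda_{1}+\lambda_{2})}S_{n}$ with $\lambda_{1}/(\lambda_{1}+\lambda_{2})<1$. Already for $\lambda_{1}=\lambda_{2}=1$ you would need $\sqrt{n}\,S_{n}\leq\tfrac{1}{2}[(n+1)S_{n}-nS_{n-1}]$, which fails whenever $S_{n-1}$ is close to $S_{n}$. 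The missing idea is Young's (weighted AM--GM) inequality applied \emph{before} any crude bound on $\nabla u_{n}$: with $A_{n}=\tfrac{1}{n}\sum_{k=1}^{n}\nabla u_{k}$ one writes
\begin{equation*}
u_{n}^{\lambda_{1}}\big(\nabla u_{n}\big)^{\lambda_{2}}
=n^{\lambda_{1}}A_{n}^{\lambda_{1}}\big(\nabla u_{n}\big)^{\lambda_{2}}
\leq n^{\lambda_{1}}\Big[\tfrac{\lambda_{1}}{\lambda_{1}+\lambda_{2}}A_{n}^{\lambda_{1}+\lambda_{2}}
+\tfrac{\lambda_{2}}{\lambda_{1}+\lambda_{2}}\big(\nabla u_{n}\big)^{\lambda_{1}+\lambda_{2}}\Big],
\end{equation*}
so that the term $(\nabla u_{n})^{\lambda_{1}+\lambda_{2}}$ is absorbed exactly into $S_{n}-S_{n-1}$, only the averaged term needs the power-mean estimate $A_{n}^{\lambda_{1}+\lambda_{2}}\leq\tfrac{1}{n}S_{n}$, and the constant closes via $n^{\lambda_{1}}+\lambda_{1}n^{\lambda_{1}-1}\leq(n+1)^{\lambda_{1}}$ and $\lambda_{2}\geq1$. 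This is precisely the paper's argument; your ``single scalar inequality to be verified by calculus'' is never actually supplied, and as sketched it points at the wrong estimate.

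Separately, your parenthetical about the base case is mistaken in its conclusion. For $n=1$ the claim reads $(\nabla u_{1})^{\lambda_{1}+\lambda_{2}}\leq\frac{\lambda_{2}\,2^{\lambda_{1}}}{\lambda_{1}+\lambda_{2}}(\nabla u_{1})^{\lambda_{1}+\lambda_{2}}$, and the required inequality $\lambda_{1}+\lambda_{2}\leq\lambda_{2}(1+\lambda_{1})\leq\lambda_{2}\,2^{\lambda_{1}}$ does hold for all $\lambda_{1},\lambda_{2}\geq1$: the first step is $\lambda_{1}\lambda_{2}\geq\lambda_{1}$ and the second is Bernoulli's inequality $2^{\lambda_{1}}\geq1+\lambda_{1}$. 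So the base case is fine and does not ``force a more careful argument''; the entire difficulty sits in the inductive step described above.
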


\begin{proof}
Since $\nabla u_{i}=u_{i}-u_{i-1}$, we have $u_{i}=\sum_{j=1}^{i}\nabla u_{j}$.
We may rewrite \eqref{eq3.1} as
\begin{equation}
\label{eq3.2}
\sum^{n}_{i=1}\bigg(\sum_{j=1}^{i}\nabla u_{j}\bigg)^{\lambda_{1}}\big(
\nabla u_{i}\big)^{\lambda_{2}}
\leq
\frac{\lambda_{2}(n+1)^{\lambda_{1}}}{\lambda_{1}+\lambda_{2}}
\sum^{n}_{i=1}\big(\nabla u_{i}\big)^{\lambda_{1}+\lambda_{2}}.
\end{equation}
We shall prove \eqref{eq3.2} by induction on $n$.
Clearly, \eqref{eq3.2} holds with $n=1$.
Assume that it holds for $n$, so that
\begin{equation}
\label{eq3.03}
\begin{split}
\sum^{n+1}_{i=1}&\bigg(\sum_{j=1}^{i}\nabla
u_{j}\bigg)^{\lambda_{1}}\big(\nabla u_{i}\big)^{\lambda_{2}}\\
&\leq\frac{\lambda_{2}(n+1)^{\lambda_{1}}}{\lambda_{1}+\lambda_{2}}
\Bigg(\sum^{n}_{i=1}\big(\nabla u_{i}\big)^{\lambda_{1}+\lambda_{2}}
+\frac{\lambda_{1}+\lambda_{2}}{\lambda_{2}}\bigg(\frac{1}{n+1}
\sum_{j=1}^{n+1}\nabla u_{j}\bigg)^{\lambda_{1}}\big(\nabla u_{n+1}\big)^{\lambda_{2}}\Bigg)\\
&\leq\frac{\lambda_{2}(n+1)^{\lambda_{1}}}{\lambda_{1}+\lambda_{2}}
\Bigg(\sum^{n}_{i=1}\big(\nabla u_{i}\big)^{\lambda_{1}+\lambda_{2}}
+\frac{\lambda_{1}+\lambda_{2}}{\lambda_{2}}
A_{n+1}^{\lambda_{1}}\big(\nabla u_{n+1}\big)^{\lambda_{2}}\Bigg),
\end{split}
\end{equation}
where $A_{n+1}=\frac{1}{n+1}\sum_{j=1}^{n+1}\nabla u_{j}$.
Using Young's inequality, we have
$$
A_{n+1}^{\lambda_{1}}\big(\nabla u_{n+1}\big)^{\lambda_{2}}
\leq\frac{\lambda_{1}}{\lambda_{1}+\lambda_{2}}A_{n+1}^{\lambda_{1}+\lambda_{2}}
+\frac{\lambda_{2}}{\lambda_{1}+\lambda_{2}}\big(\nabla u_{n+1}\big)^{\lambda_{1}+\lambda_{2}}.
$$
Then,
\begin{equation}
\label{eq3.04}
\begin{split}
\frac{\lambda_{1}+\lambda_{2}}{\lambda_{2}}A_{n+1}^{\lambda_{1}}\big(
\nabla u_{n+1}\big)^{\lambda_{2}}
\leq\frac{\lambda_{1}}{\lambda_{2}}A_{n+1}^{\lambda_{1}+\lambda_{2}}
+\big(\nabla u_{n+1}\big)^{\lambda_{1}+\lambda_{2}}.
\end{split}
\end{equation}
Thanks to H\"{o}lder's inequality, it follows that
\begin{equation*}
\begin{split}
A_{n+1}&=\frac{1}{n+1}\sum_{j=1}^{n+1}\nabla u_{j}\\
&\leq\Bigg(\sum_{j=1}^{n+1}
\bigg(\frac{1}{n+1}\bigg)^{\frac{\lambda_{1}
+\lambda_{2}}{\lambda_{1}+\lambda_{2}-1}}\Bigg)^{\frac{\lambda_{1}
+\lambda_{2}-1}{\lambda_{1}+\lambda_{2}}}
\Bigg(\sum_{j=1}^{n+1}\big(\nabla
u_{i}\big)^{\lambda_{1}+\lambda_{2}}\Bigg)^{\frac{1}{\lambda_{1}+\lambda_{2}}}\\
&\leq\bigg(\frac{1}{n+1}\bigg)^{\frac{1}{\lambda_{1}+\lambda_{2}}}
\Bigg(\sum_{j=1}^{n+1}\big(\nabla u_{i}\big)^{\lambda_{1}
+\lambda_{2}}\Bigg)^{\frac{1}{\lambda_{1}+\lambda_{2}}}.
\end{split}
\end{equation*}
Consequently, we get
\begin{equation}
\label{eq3.05}
\begin{split}
A_{n+1}^{\lambda_{1}+\lambda_{2}}
\leq\frac{1}{n+1}\sum_{j=1}^{n+1}\big(\nabla u_{i}\big)^{\lambda_{1}+\lambda_{2}}.
\end{split}
\end{equation}
Thus, combining \eqref{eq3.03}, \eqref{eq3.04} and \eqref{eq3.05}, we have
\begin{equation*}
\begin{split}
\sum^{n+1}_{i=1}\bigg(\sum_{j=1}^{i}\nabla u_{j}\bigg)^{\lambda_{1}}
\big(\nabla u_{i}\big)^{\lambda_{2}}
&\leq\frac{\lambda_{2}(n+1)^{\lambda_{1}}}{\lambda_{1}+\lambda_{2}}
\Bigg(\sum^{n}_{i=1}\big(\nabla u_{i}\big)^{\lambda_{1}+\lambda_{2}}
+\frac{\lambda_{1}+\lambda_{2}}{\lambda_{2}}A_{n+1}^{\lambda_{1}}
\big(\nabla u_{n+1}\big)^{\lambda_{2}}\Bigg)\\
&\leq\frac{\lambda_{2}(n+1)^{\lambda_{1}}}{\lambda_{1}+\lambda_{2}}
\Bigg(\sum^{n}_{i=1}\big(\nabla u_{i}\big)^{\lambda_{1}+\lambda_{2}}
+\frac{\lambda_{1}}{\lambda_{2}}A_{n+1}^{\lambda_{1}+\lambda_{2}}
+\big(\nabla u_{n+1}\big)^{\lambda_{1}+\lambda_{2}}\Bigg)\\
&\leq\frac{\lambda_{2}(n+1)^{\lambda_{1}}}{\lambda_{1}+\lambda_{2}}
\Bigg(\sum^{n+1}_{i=1}\big(\nabla u_{i}\big)^{\lambda_{1}+\lambda_{2}}
+\frac{\lambda_{1}}{n+1}\sum_{j=1}^{n+1}\big(\nabla u_{i}
\big)^{\lambda_{1}+\lambda_{2}}\Bigg)\\
&=\frac{\lambda_{2}(n+1)^{\lambda_{1}}}{\lambda_{1}+\lambda_{2}}
\cdot\frac{n+1+\lambda_{1}}{n+1}\cdot\sum_{j=1}^{n+1}
\big(\nabla u_{i}\big)^{\lambda_{1}+\lambda_{2}}\\
&=\frac{\lambda_{2}\Big[(n+1)^{\lambda_{1}}+\lambda_{1}(n+1)^{\lambda_{1}-1}\Big]}{\lambda_{1}
+\lambda_{2}}\sum_{j=1}^{n+1}\big(\nabla u_{i}\big)^{\lambda_{1}+\lambda_{2}}\\
&\leq\frac{\lambda_{2}(n+2)^{\lambda_{1}}}{\lambda_{1}+\lambda_{2}}\sum_{j=1}^{n+1}
\big(\nabla u_{i}\big)^{\lambda_{1}+\lambda_{2}}.
\end{split}
\end{equation*}
The proof is complete.
\end{proof}

Thanks to Lemma~\ref{lem 3.1}, we can easily obtain the following Lemma~\ref{lem 3.01},

\begin{lemma}
\label{lem 3.01}
Let $\{u_{i}\}_{i=1}^{n}$ be a sequence of numbers,
$u_{0}=0$, and $\lambda_{1},\ \lambda_{2}\geq1$. Then,
\begin{equation}
\label{eq3.01}
\sum^{n}_{i=1}|u_{i}|^{\lambda_{1}}\big|\nabla u_{i}\big|^{\lambda_{2}}
\leq
\frac{\lambda_{2}(n+1)^{\lambda_{1}}}{\lambda_{1}+\lambda_{2}}
\sum^{n}_{i=1}\big|\nabla u_{i}\big|^{\lambda_{1}+\lambda_{2}}.
\end{equation}
\end{lemma}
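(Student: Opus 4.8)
The plan is to deduce Lemma~\ref{lem 3.01} from Lemma~\ref{lem 3.1} by reducing the general signed sequence to the non-decreasing non-negative case. First I would observe that $|u_{i}| = |\sum_{j=1}^{i}\nabla u_{j}| \leq \sum_{j=1}^{i}|\nabla u_{j}|$ by the triangle inequality, so if I set $v_{i} := \sum_{j=1}^{i}|\nabla u_{j}|$ with $v_{0}=0$, then $\{v_{i}\}_{i=1}^{n}$ is a non-decreasing sequence of non-negative numbers, $v_{0}=0$, and moreover $\nabla v_{i} = |\nabla u_{i}| \geq 0$ for each $i$. Since $\lambda_{1}\geq 1$, monotonicity of $t\mapsto t^{\lambda_{1}}$ on $[0,\infty)$ gives $|u_{i}|^{\lambda_{1}} \leq v_{i}^{\lambda_{1}}$, hence
\begin{equation*}
\sum_{i=1}^{n}|u_{i}|^{\lambda_{1}}\bigl|\nabla u_{i}\bigr|^{\lambda_{2}}
\leq \sum_{i=1}^{n} v_{i}^{\lambda_{1}}\bigl(\nabla v_{i}\bigr)^{\lambda_{2}}.
\end{equation*}

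Next I would apply Lemma~\ref{lem 3.1} to the sequence $\{v_{i}\}_{i=1}^{n}$ (which satisfies all its hypotheses), obtaining
\begin{equation*}
\sum_{i=1}^{n} v_{i}^{\lambda_{1}}\bigl(\nabla v_{i}\bigr)^{\lambda_{2}}
\leq \frac{\lambda_{2}(n+1)^{\lambda_{1}}}{\lambda_{1}+\lambda_{2}}
\sum_{i=1}^{n}\bigl(\nabla v_{i}\bigr)^{\lambda_{1}+\lambda_{2}}.
\end{equation*}
Finally, substituting back $\nabla v_{i} = |\nabla u_{i}|$ on the right-hand side and chaining the two displayed inequalities yields \eqref{eq3.01} immediately.

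I expect no real obstacle here; the only points requiring a moment of care are verifying that $t\mapsto t^{\lambda_{1}}$ is indeed non-decreasing on $[0,\infty)$ (which needs $\lambda_{1}\geq 1$, or even just $\lambda_{1}>0$, and is exactly why the hypothesis $\lambda_{1}\geq 1$ is available) and noting that the definition of $u^{\lambda}$ and powers in the statement refer to real powers of non-negative reals, so the manipulations are legitimate. The substitution $v_{i}=\sum_{j\le i}|\nabla u_{j}|$ is the key idea that converts an arbitrary real sequence into one meeting the monotonicity requirement of Lemma~\ref{lem 3.1} while controlling $|u_{i}|$ from above; the rest is routine.
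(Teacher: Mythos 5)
Your proof is correct and rests on exactly the same key observation as the paper's, namely $|u_{i}|\leq\sum_{j=1}^{i}|\nabla u_{j}|$; the paper then simply says the rest is ``similar to Lemma~\ref{lem 3.1} and omitted.'' Your packaging via the majorant sequence $v_{i}=\sum_{j\leq i}|\nabla u_{j}|$ is a slightly cleaner way to finish, since it turns the lemma into a direct corollary of Lemma~\ref{lem 3.1} rather than requiring the induction to be re-run.
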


\begin{proof}
Since $|\nabla u_{i}|=|u_{i}-u_{i-1}|$, we have $|u_{i}|\leq\sum_{j=1}^{i}|\nabla u_{j}|$.
The following proof is similar to that of  Lemma~\ref{lem 3.1} and is
omitted here.
\end{proof}

We are now ready to formulate and prove our first original result.

\begin{theorem}
\label{thm 3.1}
Let $\{u_{i}\}_{i=1}^{n}$ be a sequence of intervals, $u_{0}=[0,0]$,
and $\lambda_{1},\lambda_{2}\geq1$.
If $u_{i}$ is monotone and $\mu$-increasing, then
\begin{equation}
\label{3.1}
\sum^{n}_{i=1}\Big\|u_{i}^{\lambda_{1}}\big(\nabla u_{i}\big)^{\lambda_{2}}\Big\|
\leq
\frac{\lambda_{2}(n+1)^{\lambda_{1}}}{\lambda_{1}+\lambda_{2}}
\sum^{n}_{i=1}\big\|\nabla u_{i}\big\|^{\lambda_{1}+\lambda_{2}}.
\end{equation}
\end{theorem}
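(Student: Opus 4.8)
The plan is to reduce the interval-valued inequality to the scalar inequality of Lemma~\ref{lem 3.01} applied to the endpoint sequences $\underline{u_i}$ and $\overline{u_i}$. First I would observe that, because $u_i$ is monotone (say non-decreasing; the non-increasing case is symmetric) and $\mu$-increasing, the gH-difference $\nabla u_i = u_i \bm{\ominus_g} u_{i-1}$ has a controlled form: since $\mathrm{len}(u_i)$ is increasing, we have $\overline{u_i}-\overline{u_{i-1}} \geq \underline{u_i}-\underline{u_{i-1}} \geq 0$, so $\nabla u_i = [\underline{u_i}-\underline{u_{i-1}},\ \overline{u_i}-\overline{u_{i-1}}]$ and in particular $\|\nabla u_i\| = \overline{u_i}-\overline{u_{i-1}} = \nabla \overline{u_i}$, the ordinary backward difference of the upper endpoint. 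Likewise, because monotonicity forces the dominant endpoint in absolute value to be a fixed one of $\underline{u_i}, \overline{u_i}$ consistently along the sequence, $\|u_i^{\lambda_1}(\nabla u_i)^{\lambda_2}\|$ should collapse to $|\overline{u_i}|^{\lambda_1}\,|\nabla \overline{u_i}|^{\lambda_2}$ (or the analogous expression in the lower endpoints, in the non-increasing case).

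The key steps in order are: (i) unwind the definitions of $u^\lambda$, of the product of intervals, and of the quasi-norm $\|\cdot\|$, to show that for a monotone $\mu$-increasing sequence the norm $\|u_i^{\lambda_1}(\nabla u_i)^{\lambda_2}\|$ equals $\big|\,w_i\,\big|^{\lambda_1}\big|\nabla w_i\big|^{\lambda_2}$, where $w_i$ denotes whichever endpoint ($\overline{u_i}$ in the non-decreasing case) realizes the maximum in $\|\cdot\|$ uniformly in $i$; here one uses $u_0=[0,0]$ so that $w_0=0$; (ii) similarly identify $\|\nabla u_i\|^{\lambda_1+\lambda_2} = |\nabla w_i|^{\lambda_1+\lambda_2}$; (iii) apply Lemma~\ref{lem 3.01} to the scalar sequence $\{w_i\}$ with $w_0=0$ to obtain
\[
\sum_{i=1}^{n} |w_i|^{\lambda_1}\big|\nabla w_i\big|^{\lambda_2}
\leq \frac{\lambda_2 (n+1)^{\lambda_1}}{\lambda_1+\lambda_2}\sum_{i=1}^{n}\big|\nabla w_i\big|^{\lambda_1+\lambda_2};
\]
(iv) substitute the identities from (i)–(ii) back in to recover \eqref{3.1}.

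The main obstacle will be step (i): carefully verifying that the $\mu$-increasing and monotonicity hypotheses are exactly what is needed to guarantee that a \emph{single} endpoint dominates in the quasi-norm simultaneously in the factor $u_i^{\lambda_1}$, the factor $(\nabla u_i)^{\lambda_2}$, and their interval product, so that $\|\cdot\|$ genuinely factors as a product of scalar absolute values rather than merely being bounded by one. One must check the sign patterns: $\mu$-increasing gives $\nabla u_i \geq [0,0]$ componentwise in the sense above, so $(\nabla u_i)^{\lambda_2}$ is an interval of non-negative numbers with upper endpoint $(\nabla \overline{u_i})^{\lambda_2}$; monotonicity of $u_i$ (together with $u_0=[0,0]$, which starts both endpoints at $0$) propagates a fixed sign, so $u_i^{\lambda_1}$ likewise has its norm realized at the same endpoint; then the interval product of two non-negative intervals has norm equal to the product of the two upper endpoints. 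Once this bookkeeping is done, the rest is a direct appeal to Lemma~\ref{lem 3.01}, and I would also note Remark~\ref{rmk 3} shows the result degenerates correctly to the scalar case.
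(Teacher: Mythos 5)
Your proposal is correct and follows essentially the same route as the paper: both reduce the interval inequality to the scalar Lemma~\ref{lem 3.01} by analysing which endpoint of $u_{i}^{\lambda_{1}}\big(\nabla u_{i}\big)^{\lambda_{2}}$ realizes the quasi-norm under the monotone and $\mu$-increasing hypotheses (the paper by cases on the parity of $\lambda_{1},\lambda_{2}$, you by tracking the uniformly dominant endpoint $w_{i}$). Your explicit observation that a \emph{single} endpoint dominates for all $i$ is precisely what justifies the paper's step $\sum_{i}\max\{a_{i},b_{i}\}=\max\{\sum_{i}a_{i},\sum_{i}b_{i}\}$, so the bookkeeping you flag as the main obstacle is the same bookkeeping the paper carries out.
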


\begin{proof}
Suppose that $u_{i}$ is increasing and $\mu$-increasing. Then,
$$
u_{i}^{\lambda_{1}}
=\Big[\underline{u_{i}}^{\lambda_{1}},
\overline{u_{i}}^{\lambda_{1}}\Big],\ \
\big(\nabla u_{i}\big)^{\lambda_{2}}
=\Big[\big(\underline{\nabla u_{i}}\big)^{\lambda_{2}},
\big(\overline{\nabla u_{i}}\big)^{\lambda_{2}}\Big].
$$
Consequently, we obtain that
$$
u_{i}^{\lambda_{1}}\big(\nabla u_{i}\big)^{\lambda_{2}}
=\Big[\underline{u_{i}}^{\lambda_{1}}\big(\underline{\nabla u_{i}}\big)^{\lambda_{2}},
\overline{u_{i}}^{\lambda_{1}}\big(\overline{\nabla u_{i}}\big)^{\lambda_{2}}\Big].
$$
If $u_{i}$ is decreasing and $\mu$-increasing, then
\begin{equation*}
u_{i}^{\lambda_{1}}=
\begin{cases}
\Big[\underline{u_{i}}^{\lambda_{1}},
\overline{u_{i}}^{\lambda_{1}}\Big]&
\text{if $\lambda_{1}$ is odd},\\[2mm]
\Big[\overline{u_{i}}^{\lambda_{1}},
\underline{u_{i}}^{\lambda_{1}}\Big]&
\text{if $\lambda_{1}$ is even},
\end{cases}
\end{equation*}
\begin{equation*}
\big(\nabla u_{i}\big)^{\lambda_{2}}=
\begin{cases}
\Big[\big(\underline{\nabla u_{i}}\big)^{\lambda_{2}},
\big(\overline{\nabla u_{i}}\big)^{\lambda_{2}}\Big]
& \text{if $\lambda_{2}$ is odd},\\[2mm]
\Big[\big(\overline{\nabla u_{i}}\big)^{\lambda_{2}},
\big(\underline{\nabla u_{i}}\big)^{\lambda_{2}}\Big]
& \text{if $\lambda_{2}$ is even}.
\end{cases}
\end{equation*}
Consequently, we obtain
\begin{equation*}
u_{i}^{\lambda_{1}}\big(\nabla u_{i}\big)^{\lambda_{2}}
=
\begin{cases}
\Big[\overline{u_{i}}^{\lambda_{1}}\big(\overline{\nabla u_{i}}\big)^{\lambda_{2}},
\underline{u_{i}}^{\lambda_{1}}\big(\underline{\nabla u_{i}}\big)^{\lambda_{2}}\Big]
& \text{if $\lambda_{1}$ and $\lambda_{2}$ are odd},\\[2mm]
\Big[\overline{u_{i}}^{\lambda_{1}}\big(\overline{\nabla u_{i}}\big)^{\lambda_{2}},
\underline{u_{i}}^{\lambda_{1}}\big(\underline{\nabla u_{i}}\big)^{\lambda_{2}}\Big]
& \text{if $\lambda_{1}$ and $\lambda_{2}$ are even},\\[2mm]
\Big[\underline{u_{i}}^{\lambda_{1}}\big(\underline{\nabla u_{i}}\big)^{\lambda_{2}},
\overline{u_{i}}^{\lambda_{1}}\big(\overline{\nabla u_{i}}\big)^{\lambda_{2}}\Big]
& \text{if $\lambda_{1}$ is odd and $\lambda_{2}$ is even},\\[2mm]
\Big[\underline{u_{i}}^{\lambda_{1}}\big(\underline{\nabla u_{i}}\big)^{\lambda_{2}},
\overline{u_{i}}^{\lambda_{1}}\big(\overline{\nabla u_{i}}\big)^{\lambda_{2}}\Big]
& \text{if $\lambda_{1}$ is even and $\lambda_{2}$ is odd}.
\end{cases}
\end{equation*}
By Lemma~\ref{lem 3.01}, it follows that
\begin{align*}
\sum^{n}_{i=1}\Big\|u_{i}^{\lambda_{1}}\big(\nabla u_{i}\big)^{\lambda_{2}}\Big\|
&=\sum^{n}_{i=1}\Big\|\Big[
\min\big\{\underline{u_{i}}^{\lambda_{1}}\big(\underline{\nabla u_{i}}\big)^{\lambda_{2}},
\overline{u_{i}}^{\lambda_{1}}\big(\overline{\nabla u_{i}}\big)^{\lambda_{2}}\big\},
\max\big\{\underline{u_{i}}^{\lambda_{1}}\big(\underline{\nabla u_{i}}\big)^{\lambda_{2}},
\overline{u_{i}}^{\lambda_{1}}\big(\overline{\nabla u_{i}}\big)^{\lambda_{2}}\big\}
\Big]\Big\|\\
&=\sum^{n}_{i=1}\max\Big\{\Big|\underline{u_{i}}^{\lambda_{1}}
\big(\underline{\nabla u_{i}}\big)^{\lambda_{2}}\Big|,
\Big|\overline{u_{i}}^{\lambda_{1}}
\big(\overline{\nabla u_{i}}\big)^{\lambda_{2}}\Big|\Big\}\\
&=\max\Bigg\{\sum^{n}_{i=1}
\Big|\underline{u_{i}}^{\lambda_{1}}\big(\underline{\nabla u_{i}}\big)^{\lambda_{2}}\Big|,
\sum^{n}_{i=1}\Big|\overline{u_{i}}^{\lambda_{1}}
\big(\overline{\nabla u_{i}}\big)^{\lambda_{2}}\Big|\Bigg\}\\
&\leq\frac{\lambda_{2}(n+1)^{\lambda_{1}}}{\lambda_{1}+\lambda_{2}}
\max\Bigg\{\sum^{n}_{i=1}
\Big|\underline{\nabla u_{i}}^{\lambda_{1}+\lambda_{2}}\Big|,
\sum^{n}_{i=1}\Big|\overline{\nabla u_{i}}^{\lambda_{1}+\lambda_{2}}\Big|\Bigg\}\\
&\leq\frac{\lambda_{2}(n+1)^{\lambda_{1}}}{\lambda_{1}+\lambda_{2}}
\sum^{n}_{i=1}\max\Big\{\big|\underline{\nabla u_{i}}\big|^{\lambda_{1}+\lambda_{2}},
\big|\overline{\nabla u_{i}}\big|^{\lambda_{1}+\lambda_{2}}\Big\}\\
&\leq\frac{\lambda_{2}(n+1)^{\lambda_{1}}}{\lambda_{1}+\lambda_{2}} \sum^{n}_{i=1}\|
\nabla u_{i}\|^{\lambda_{1}+\lambda_{2}}.
\end{align*}
This concludes the proof.
\end{proof}

Follows an example of application of our Theorem~\ref{thm 3.1}.

\begin{example}
\label{ex 3.1}
Suppose that
$\{u_{i}\}_{i=0}^{n}=\{[0,0],[1,2],[2,4],\ldots,[n,2n]\}$
and $\lambda_{1},\ \lambda_{2}\geq1$. By Theorem~\ref{thm 3.1}, we have
\begin{equation*}
\begin{split}
\sum^{n}_{i=1}\Big\|u_{i}^{\lambda_{1}}\big(\nabla u_{i}\big)^{\lambda_{2}}\Big\|
&=\sum^{n}_{i=1}\Big\|\big[i^{\lambda_{1}},(2i)^{\lambda_{1}}\big]\cdot [1,2]^{\lambda_{2}}\Big\|\\
&=2^{\lambda_{1}+\lambda_{2}}\sum^{n}_{i=1}i^{\lambda_{1}}\\
&\leq\frac{\lambda_{2}n(n+1)^{\lambda_{1}}}{\lambda_{1}+\lambda_{2}}2^{\lambda_{1}+\lambda_{2}}\\
&=\frac{\lambda_{2}(n+1)^{\lambda_{1}}}{\lambda_{1}+\lambda_{2}}
\sum^{n}_{i=1}2^{\lambda_{1}+\lambda_{2}}\\
&=\frac{\lambda_{2}(n+1)^{\lambda_{1}}}{\lambda_{1}+\lambda_{2}}
\sum^{n}_{i=1}\big\|\nabla u_{i}\big\|^{\lambda_{1}+\lambda_{2}}.
\end{split}
\end{equation*}
\end{example}

\begin{lemma}
\label{lem 3.02}
Let $\{u_{i}\}_{i=1}^{m}$ be a sequence of numbers,
$u_{m}=0$, and $\lambda_{1},\ \lambda_{2}\geq1$. Then,
\begin{equation}
\label{eq3.02}
\sum^{m-1}_{i=n}|u_{i}|^{\lambda_{1}}\big|\nabla u_{i}\big|^{\lambda_{2}}
\leq
\frac{\lambda_{2}(m-n+1)^{\lambda_{1}}}{\lambda_{1}+\lambda_{2}}
\sum^{m}_{i=n}\big|\nabla u_{i}\big|^{\lambda_{1}+\lambda_{2}}.
\end{equation}
\end{lemma}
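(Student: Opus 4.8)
The plan is to run the inductive scheme from the proof of Lemma~\ref{lem 3.1}, but ``from the right end'', keeping careful track of how many differences occur in each partial sum. Since $u_{m}=0$, telescoping gives $u_{i}=-\sum_{j=i+1}^{m}\nabla u_{j}$ for every $i\le m-1$, and hence $|u_{i}|\le\sum_{j=i+1}^{m}|\nabla u_{j}|$. Writing $k:=m-n$, I would prove by induction on $k\ge1$ that
\[
\sum_{i=n}^{m-1}|u_{i}|^{\lambda_{1}}\big|\nabla u_{i}\big|^{\lambda_{2}}
\le\frac{\lambda_{2}(k+1)^{\lambda_{1}}}{\lambda_{1}+\lambda_{2}}\sum_{i=n}^{m}\big|\nabla u_{i}\big|^{\lambda_{1}+\lambda_{2}},
\]
which is exactly \eqref{eq3.02}.

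For $k=1$ (so $n=m-1$) the left-hand side is the single term $|u_{m-1}|^{\lambda_{1}}|\nabla u_{m-1}|^{\lambda_{2}}$, and since $\nabla u_{m}=u_{m}-u_{m-1}=-u_{m-1}$ the claim reduces to $|u_{m-1}|^{\lambda_{1}}|\nabla u_{m-1}|^{\lambda_{2}}\le\frac{\lambda_{2}2^{\lambda_{1}}}{\lambda_{1}+\lambda_{2}}\big(|u_{m-1}|^{\lambda_{1}+\lambda_{2}}+|\nabla u_{m-1}|^{\lambda_{1}+\lambda_{2}}\big)$, which follows from Young's inequality $a^{\lambda_{1}}b^{\lambda_{2}}\le\frac{\lambda_{1}}{\lambda_{1}+\lambda_{2}}a^{\lambda_{1}+\lambda_{2}}+\frac{\lambda_{2}}{\lambda_{1}+\lambda_{2}}b^{\lambda_{1}+\lambda_{2}}$ together with $\lambda_{1}\le2^{\lambda_{1}}$ and $\lambda_{2}\ge1$. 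For the inductive step I pass from $n$ to $n-1$: the left-hand side gains the term $|u_{n-1}|^{\lambda_{1}}|\nabla u_{n-1}|^{\lambda_{2}}$, which I control by writing $|u_{n-1}|\le\sum_{j=n}^{m}|\nabla u_{j}|=(k+1)C$ with $C:=\frac{1}{k+1}\sum_{j=n}^{m}|\nabla u_{j}|$, then applying Young's inequality to $C^{\lambda_{1}}|\nabla u_{n-1}|^{\lambda_{2}}$ and the power-mean (H\"older) inequality $C^{\lambda_{1}+\lambda_{2}}\le\frac{1}{k+1}\sum_{j=n}^{m}|\nabla u_{j}|^{\lambda_{1}+\lambda_{2}}$ — the same two estimates used in the proof of Lemma~\ref{lem 3.1}. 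Combining with the induction hypothesis and collecting terms, the constant multiplying $\sum_{i=n-1}^{m}|\nabla u_{i}|^{\lambda_{1}+\lambda_{2}}$ is bounded by $\frac{\lambda_{2}(k+1)^{\lambda_{1}}+\lambda_{1}(k+1)^{\lambda_{1}-1}}{\lambda_{1}+\lambda_{2}}\le\frac{\lambda_{2}\big[(k+1)^{\lambda_{1}}+\lambda_{1}(k+1)^{\lambda_{1}-1}\big]}{\lambda_{1}+\lambda_{2}}\le\frac{\lambda_{2}(k+2)^{\lambda_{1}}}{\lambda_{1}+\lambda_{2}}$, where the first step uses $\lambda_{1}\le\lambda_{1}\lambda_{2}$ and the second uses $(k+2)^{\lambda_{1}}\ge(k+1)^{\lambda_{1}}+\lambda_{1}(k+1)^{\lambda_{1}-1}$ (mean value theorem, valid since $\lambda_{1}\ge1$). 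This closes the induction.

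The point that needs care — and which I expect to be the main obstacle to a clean write-up — is that Lemma~\ref{lem 3.02} is \emph{not} merely the reflection of Lemma~\ref{lem 3.01}. Reversing the sequence by $v_{i}=u_{m-i}$ (so that $v_{0}=0$) converts $\nabla$ into the \emph{forward} difference, $|\nabla v_{i}|=|u_{m-i+1}-u_{m-i}|$, which after relabeling yields a bound on $\sum_{i}|u_{i}|^{\lambda_{1}}|u_{i+1}-u_{i}|^{\lambda_{2}}$ rather than on $\sum_{i}|u_{i}|^{\lambda_{1}}|u_{i}-u_{i-1}|^{\lambda_{2}}$. One must therefore use the \emph{sharp} telescoping estimate $|u_{i}|\le\sum_{j=i+1}^{m}|\nabla u_{j}|$ (the inner sum starting at $j=i+1$, not at $j=i$): the relevant partial sum then has one summand fewer than in the proof of Lemma~\ref{lem 3.1}, and it is precisely this economy that gives the constant $(m-n+1)^{\lambda_{1}}$ instead of the weaker $(m-n+2)^{\lambda_{1}}$. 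Apart from this observation, the argument is a careful but routine repetition of the computation carried out for Lemma~\ref{lem 3.1}.
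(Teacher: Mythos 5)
Your proof is correct and takes essentially the same route as the paper: the paper's own (sketch) proof consists precisely of the right-telescoping bound $|u_{i}|\leq\sum_{j=i+1}^{m}|\nabla u_{j}|$ followed by the remark that the rest is ``similar to Lemma~\ref{lem 3.1}''. Your induction on $k=m-n$, using Young's inequality and the power-mean estimate exactly as in Lemma~\ref{lem 3.1}, simply writes out the details the paper omits, and your accounting of the constant $(m-n+1)^{\lambda_{1}}$ is accurate.
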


\begin{proof}
Since $|\nabla u_{i}|=|u_{i}-u_{i-1}|$, we have $|u_{i}|\leq\sum_{j=i+1}^{m}|\nabla u_{j}|$.
The following proof is similar to that of  Lemma~\ref{lem 3.1} and is
omitted here.
\end{proof}

Similarly to Theorem~\ref{thm 3.1}, we obtain an analogous result
when $u_{i}$ is monotone but $\mu$-decreasing instead of $\mu$-increasing.

\begin{theorem}
\label{thm 3.2}
Let $\{u_{i}\}_{i=0}^{m}$ be a sequence of intervals, $u_{m}=[0,0]$,
and $\lambda_{1}, \lambda_{2}\geq1$. If $u_{i}$ is monotone and $\mu$-decreasing, then
\begin{equation}
\sum^{m-1}_{i=n}\Big\|u_{i}^{\lambda_{1}}\big(\nabla u_{i}\big)^{\lambda_{2}}\Big\|
\leq
\frac{\lambda_{2}(m-n+1)^{\lambda_{1}}}{\lambda_{1}+\lambda_{2}}
\sum^{m}_{i=n}\big\|\nabla u_{i}\big\|^{\lambda_{1}+\lambda_{2}}.
\end{equation}
\end{theorem}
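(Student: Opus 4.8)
The plan is to mimic the proof of Theorem~\ref{thm 3.1} essentially verbatim, replacing the forward-indexed summations by the backward-indexed ones and replacing the appeal to Lemma~\ref{lem 3.01} by an appeal to Lemma~\ref{lem 3.02}. First I would split into the two cases according to whether $u_i$ is increasing or decreasing, noting that now $u_i$ is $\mu$-\emph{decreasing}. The key observation is that, just as in the $\mu$-increasing case, the endpoints of the product interval $u_i^{\lambda_1}(\nabla u_i)^{\lambda_2}$ are, up to order, $\underline{u_i}^{\lambda_1}(\underline{\nabla u_i})^{\lambda_2}$ and $\overline{u_i}^{\lambda_1}(\overline{\nabla u_i})^{\lambda_2}$; this is because, whatever the parities of $\lambda_1$ and $\lambda_2$, the expressions $u_i^{\lambda_1}$ and $(\nabla u_i)^{\lambda_2}$ are either both ``order-preserving'' or both ``order-reversing'' on the endpoints, so the cross terms $\underline{u_i}^{\lambda_1}(\overline{\nabla u_i})^{\lambda_2}$ and $\overline{u_i}^{\lambda_1}(\underline{\nabla u_i})^{\lambda_2}$ are never the min or the max. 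Hence
$$
\Big\|u_i^{\lambda_1}(\nabla u_i)^{\lambda_2}\Big\|
=\max\Big\{\big|\underline{u_i}^{\lambda_1}(\underline{\nabla u_i})^{\lambda_2}\big|,\ \big|\overline{u_i}^{\lambda_1}(\overline{\nabla u_i})^{\lambda_2}\big|\Big\}
=\max\Big\{\big|\underline{u_i}\big|^{\lambda_1}\big|\underline{\nabla u_i}\big|^{\lambda_2},\ \big|\overline{u_i}\big|^{\lambda_1}\big|\overline{\nabla u_i}\big|^{\lambda_2}\Big\}.
$$

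Next I would sum over $i$ from $n$ to $m-1$ and pull the $\max$ outside the sum (using $\sum\max\{a_i,b_i\}\le\max\{\sum a_i,\sum b_i\}$), obtaining
$$
\sum^{m-1}_{i=n}\Big\|u_i^{\lambda_1}(\nabla u_i)^{\lambda_2}\Big\|
\leq
\max\Bigg\{\sum^{m-1}_{i=n}\big|\underline{u_i}\big|^{\lambda_1}\big|\underline{\nabla u_i}\big|^{\lambda_2},\ \sum^{m-1}_{i=n}\big|\overline{u_i}\big|^{\lambda_1}\big|\overline{\nabla u_i}\big|^{\lambda_2}\Bigg\}.
$$
Here is where Lemma~\ref{lem 3.02} enters: since $u_m=[0,0]$ gives $\underline{u_m}=\overline{u_m}=0$, both scalar sequences $\{\underline{u_i}\}$ and $\{\overline{u_i}\}$ satisfy the hypotheses of Lemma~\ref{lem 3.02}, so each of the two inner sums is bounded by $\frac{\lambda_2(m-n+1)^{\lambda_1}}{\lambda_1+\lambda_2}\sum^{m}_{i=n}|\underline{\nabla u_i}|^{\lambda_1+\lambda_2}$ and the analogous expression with $\overline{\nabla u_i}$. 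Finally I would pull the common constant out of the $\max$, use $\max\{\sum a_i,\sum b_i\}\le\sum\max\{a_i,b_i\}$ in the reverse direction, and recognize $\max\{|\underline{\nabla u_i}|^{\lambda_1+\lambda_2},|\overline{\nabla u_i}|^{\lambda_1+\lambda_2}\}=\|\nabla u_i\|^{\lambda_1+\lambda_2}$, which yields the claimed inequality.

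The only real subtlety — the ``hard part'' — is verifying the case analysis on the interval product: one must check that, when $u_i$ is decreasing and $\mu$-decreasing, the gH-difference $\nabla u_i=u_i\bm{\ominus_g}u_{i-1}$ and the power intervals $u_i^{\lambda_1}$, $(\nabla u_i)^{\lambda_2}$ behave exactly as in Theorem~\ref{thm 3.1} so that the product interval still has endpoints $\underline{u_i}^{\lambda_1}(\underline{\nabla u_i})^{\lambda_2}$ and $\overline{u_i}^{\lambda_1}(\overline{\nabla u_i})^{\lambda_2}$ in some order. This is a routine sign-tracking exercise identical to the four-case table displayed in the proof of Theorem~\ref{thm 3.1}, and everything downstream of it is purely formal. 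I would therefore state that the computation of the endpoints of $u_i^{\lambda_1}(\nabla u_i)^{\lambda_2}$ proceeds exactly as in Theorem~\ref{thm 3.1}, and then present the short chain of inequalities above, concluding the proof.
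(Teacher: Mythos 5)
Your proposal is correct and follows essentially the same route as the paper: the paper's proof of Theorem~\ref{thm 3.2} likewise runs through the parity/monotonicity case table to show that $u_{i}^{\lambda_{1}}\big(\nabla u_{i}\big)^{\lambda_{2}}$ has endpoints $\underline{u_{i}}^{\lambda_{1}}\big(\underline{\nabla u_{i}}\big)^{\lambda_{2}}$ and $\overline{u_{i}}^{\lambda_{1}}\big(\overline{\nabla u_{i}}\big)^{\lambda_{2}}$ in some order, then sums, pulls the maximum outside, and applies Lemma~\ref{lem 3.02} to the two scalar endpoint sequences exactly as you describe. The only difference is that the paper writes out the sign-tracking table explicitly (its equations \eqref{eq 3.8} and \eqref{eq 3.9}) rather than deferring to the table in Theorem~\ref{thm 3.1}, which is a matter of presentation, not substance.
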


\begin{proof}
Suppose that $u_{i}$ is increasing and $\mu$-decreasing. Then,
\begin{equation*}
u_{i}^{\lambda_{1}}=
\begin{cases}
\Big[\underline{u_{i}}^{\lambda_{1}},
\overline{u_{i}}^{\lambda_{1}}\Big]&
\text{if $\lambda_{1}$ is odd},\\[2mm]
\Big[\overline{u_{i}}^{\lambda_{1}},
\underline{u_{i}}^{\lambda_{1}}\Big]&
\text{if $\lambda_{1}$ is even},
\end{cases}
\end{equation*}
and
$$
\big(\nabla u_{i}\big)^{\lambda_{2}}
=\Big[\big(\overline{\nabla u_{i}}\big)^{\lambda_{2}},
\big(\underline{\nabla u_{i}}\big)^{\lambda_{2}}\Big].
$$
Consequently, we obtain
\begin{equation}
\label{eq 3.8}
u_{i}^{\lambda_{1}}\big(\nabla u_{i}\big)^{\lambda_{2}}
=
\begin{cases}
\Big[\underline{u_{i}}^{\lambda_{1}}\big(\underline{\nabla u_{i}}\big)^{\lambda_{2}},
\overline{u_{i}}^{\lambda_{1}}\big(\overline{\nabla u_{i}}\big)^{\lambda_{2}}\Big]
& \text{if $\lambda_{1}$ is odd},\\[2mm]
\Big[\overline{u_{i}}^{\lambda_{1}}\big(\overline{\nabla u_{i}}\big)^{\lambda_{2}},
\underline{u_{i}}^{\lambda_{1}}\big(\underline{\nabla u_{i}}\big)^{\lambda_{2}}\Big]
& \text{if $\lambda_{1}$ is even}.\\[2mm]
\end{cases}
\end{equation}

If $u_{i}$ is decreasing and $\mu$-decreasing, then
$$
u_{i}^{\lambda_{1}}=
\Big[\underline{u_{i}}^{\lambda_{1}},
\overline{u_{i}}^{\lambda_{1}}\Big],
$$
\begin{equation*}
\big(\nabla u_{i}\big)^{\lambda_{2}}=
\begin{cases}
\Big[\big(\overline{\nabla u_{i}}\big)^{\lambda_{2}},
\big(\underline{\nabla u_{i}}\big)^{\lambda_{2}}\Big]
& \text{if $\lambda_{2}$ is odd},\\[2mm]
\Big[\big(\underline{\nabla u_{i}}\big)^{\lambda_{2}},
\big(\overline{\nabla u_{i}}\big)^{\lambda_{2}}\Big]
& \text{if $\lambda_{2}$ is even}.

\end{cases}
\end{equation*}
Consequently, we obtain
\begin{equation}
\label{eq 3.9}
u_{i}^{\lambda_{1}}\big(\nabla u_{i}\big)^{\lambda_{2}}
=
\begin{cases}
\Big[\overline{u_{i}}^{\lambda_{1}}\big(\overline{\nabla u_{i}}\big)^{\lambda_{2}},
\underline{u_{i}}^{\lambda_{1}}\big(\underline{\nabla u_{i}}\big)^{\lambda_{2}}\Big]
& \text{if $\lambda_{2}$ is odd},\\[2mm]
\Big[\underline{u_{i}}^{\lambda_{1}}\big(\underline{\nabla u_{i}}\big)^{\lambda_{2}},
\overline{u_{i}}^{\lambda_{1}}\big(\overline{\nabla u_{i}}\big)^{\lambda_{2}}\Big]
& \text{if $\lambda_{2}$ is even}.\\[2mm]
\end{cases}
\end{equation}
By \ref{eq 3.8}, \ref{eq 3.9} and Lemma~\ref{lem 3.02}, it follows that
\begin{align*}
\sum^{m-1}_{i=n}\Big\|u_{i}^{\lambda_{1}}\big(\nabla u_{i}\big)^{\lambda_{2}}\Big\|
&=\sum^{m-1}_{i=n}\Big\|\Big[
\min\big\{\underline{u_{i}}^{\lambda_{1}}\big(\underline{\nabla u_{i}}\big)^{\lambda_{2}},
\overline{u_{i}}^{\lambda_{1}}\big(\overline{\nabla u_{i}}\big)^{\lambda_{2}}\big\},
\max\big\{\underline{u_{i}}^{\lambda_{1}}\big(\underline{\nabla u_{i}}\big)^{\lambda_{2}},
\overline{u_{i}}^{\lambda_{1}}\big(\overline{\nabla u_{i}}\big)^{\lambda_{2}}\big\}
\Big]\Big\|\\
&=\sum^{m-1}_{i=n}\max\Big\{\Big|\underline{u_{i}}^{\lambda_{1}}
\big(\underline{\nabla u_{i}}\big)^{\lambda_{2}}\Big|,
\Big|\overline{u_{i}}^{\lambda_{1}}
\big(\overline{\nabla u_{i}}\big)^{\lambda_{2}}\Big|\Big\}\\
&=\max\Bigg\{\sum^{m-1}_{i=n}
\Big|\underline{u_{i}}^{\lambda_{1}}\big(\underline{\nabla u_{i}}\big)^{\lambda_{2}}\Big|,
\sum^{m-1}_{i=n}\Big|\overline{u_{i}}^{\lambda_{1}}
\big(\overline{\nabla u_{i}}\big)^{\lambda_{2}}\Big|\Bigg\}\\
&\leq\frac{\lambda_{2}(m-n+1)^{\lambda_{1}}}{\lambda_{1}+\lambda_{2}}
\max\Bigg\{\sum^{m}_{i=n}
\Big|\underline{\nabla u_{i}}\Big|^{\lambda_{1}+\lambda_{2}},
\sum^{m}_{i=n}\Big|\overline{\nabla u_{i}}\Big|^{\lambda_{1}+\lambda_{2}}\Bigg\}\\
&\leq\frac{\lambda_{2}(m-n+1)^{\lambda_{1}}}{\lambda_{1}+\lambda_{2}}
\sum^{m}_{i=n}\max\Big\{\big|\underline{\nabla u_{i}}\big|^{\lambda_{1}+\lambda_{2}},
\big|\overline{\nabla u_{i}}\big|^{\lambda_{1}+\lambda_{2}}\Big\}\\
&\leq\frac{\lambda_{2}(m-n+1)^{\lambda_{1}}}{\lambda_{1}+\lambda_{2}} \sum^{m}_{i=n}\|
\nabla u_{i}\|^{\lambda_{1}+\lambda_{2}}.
\end{align*}
This concludes the proof.
\end{proof}

\begin{example}
\label{ex 3.2}
Suppose that
$$
\{u_{i}\}_{i=1}^{n}=\bigg\{[1,2],\Big[\frac{1}{2},1\Big],\Big[\frac{1}{i},\frac{2}{i}\Big],
\ldots,\Big[\frac{1}{n-1},\frac{2}{n-1}\Big],[0,0]\bigg\}
$$
and $\lambda_{1}=1$ and $\lambda_{2}=2$. By induction on $n$, we have
\begin{equation*}
\begin{split}
\sum^{n-1}_{i=2}\Big\|u_{i}^{\lambda_{1}}\big(\nabla u_{i}\big)^{\lambda_{2}}\Big\|
&=\sum^{n-1}_{i=2}\Big\|\Big[\frac{1}{i},\frac{2}{i}\Big]
\cdot \Big[\frac{-2}{i(i-1)},\frac{-1}{i(i-1)}\Big]^{2}\Big\|\\
&=\sum^{n-1}_{i=2}\Big\|\Big[\frac{1}{i},\frac{2}{i}\Big]
\cdot \Big[\frac{1}{i^{2}(i-1)^{2}},\frac{4}{i^{2}(i-1)^{2}}\Big]\Big\|\\
&=\sum^{n-1}_{i=2}\frac{8}{i^{3}(i-1)^{2}}\\
&\leq\frac{2(n-1)}{3}\sum^{n}_{i=2}\frac{2^{3}}{i^{3}(i-1)^{3}}\\
&=\frac{\lambda_{2}(n-1)^{\lambda_{1}}}{\lambda_{1}+\lambda_{2}}
\sum^{n}_{i=2}\big\|\nabla u_{i}\big\|^{\lambda_{1}+\lambda_{2}}.
\end{split}
\end{equation*}
\end{example}

Theorem~\ref{thm 3.1} is a special case of our next Theorem~\ref{thm 3.3}.

\begin{theorem}
\label{thm 3.3}
Let $\{u_{i}\}_{i=1}^{n}$ be a sequence of intervals, $u_{0}=[0,0]$,
and $\lambda_{1}, \lambda_{2}\geq1$. If $\{u_{i}\}_{i=1}^{n}$ is
piecewise alternate monotone, piecewise alternate $\mu$-monotone,
and there is no other point $i$ such that $u_{i}=[0,0]$, then
\begin{equation}
\label{3.2}
\sum^{n}_{i=1}\Big\|u_{i}^{\lambda_{1}}\big(\nabla u_{i}\big)^{\lambda_{2}}\Big\|
\leq
\frac{\lambda_{2}(n+1)^{\lambda_{1}}}{\lambda_{1}+\lambda_{2}}
\sum^{n}_{i=1}\big\|\nabla u_{i}\big\|^{\lambda_{1}+\lambda_{2}}.
\end{equation}
\end{theorem}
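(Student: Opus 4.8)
The plan is to reduce Theorem~\ref{thm 3.3} to the already-established Theorems~\ref{thm 3.1} and~\ref{thm 3.2} by exploiting the hypothesis that $\{u_i\}$ is piecewise alternately monotone and piecewise alternately $\mu$-monotone, with $u_0=[0,0]$ being the only index where the sequence vanishes at $i\geq 0$ --- wait, actually the statement allows $u_0=[0,0]$ and asks that \emph{no other} point vanishes. First I would set up notation: let $0=k_0 < k_1 < \cdots < k_r = n$ be the breakpoints where the monotonicity (and the $\mu$-monotonicity) of $\{u_i\}$ switches, so that on each block $[k_{t-1}, k_t]$ the sequence is monotone and $\mu$-monotone of a fixed type. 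The key structural observation is that since $u_i$ is alternately $\mu$-monotone and $len(u_i)$ is continuous-like along the integer sequence, on a block where $u_i$ is $\mu$-increasing we can apply the machinery behind Theorem~\ref{thm 3.1} (anchored at the left end of the block), and on a block where $u_i$ is $\mu$-decreasing we apply the machinery behind Theorem~\ref{thm 3.2} (anchored at the right end of the block).

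The main work is to show that the anchoring hypotheses are met on each block. The subtlety is that Theorems~\ref{thm 3.1} and~\ref{thm 3.2} require the sequence to vanish at the anchoring endpoint ($u_0=[0,0]$ or $u_m=[0,0]$), but here $\{u_i\}$ vanishes only at $i=0$. So on blocks other than the first (resp.\ last) we do not have a genuine zero at the relevant endpoint. The way around this is the elementary bound $|u_i| \le \sum_{j=1}^{i}|\nabla u_j|$ (and its reversed analogue $|u_i|\le \sum_{j=i+1}^{m}|\nabla u_j|$) used in the proofs of Lemmas~\ref{lem 3.01} and~\ref{lem 3.02}: these telescoping inequalities only use $u_0=[0,0]$ (resp.\ the terminal zero), \emph{not} a zero at the block endpoint. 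Hence the proof strategy is: pass to the scalar endpoints $\underline{u_i}, \overline{u_i}$ exactly as in the proof of Theorem~\ref{thm 3.1}, reduce (via the case analysis on the parities of $\lambda_1,\lambda_2$ and on the monotonicity type, which is \emph{constant within each block}) the interval norm $\|u_i^{\lambda_1}(\nabla u_i)^{\lambda_2}\|$ to $\max\{|\underline{u_i}|^{\lambda_1}|\underline{\nabla u_i}|^{\lambda_2},\ |\overline{u_i}|^{\lambda_1}|\overline{\nabla u_i}|^{\lambda_2}\}$, split the outer sum over the blocks, and on each block bound the scalar sum using the one-sided telescoping estimate $|u_i|\le\sum_{j=1}^{i}|\nabla u_j|$ together with Young's and H\"older's inequalities exactly as in the induction proof of Lemma~\ref{lem 3.1}.

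Concretely, I would write
\begin{align*}
\sum^{n}_{i=1}\Big\|u_{i}^{\lambda_{1}}\big(\nabla u_{i}\big)^{\lambda_{2}}\Big\|
&=\sum^{n}_{i=1}\max\Big\{\big|\underline{u_{i}}\big|^{\lambda_{1}}\big|\underline{\nabla u_{i}}\big|^{\lambda_{2}},
\big|\overline{u_{i}}\big|^{\lambda_{1}}\big|\overline{\nabla u_{i}}\big|^{\lambda_{2}}\Big\}\\
&=\max\Bigg\{\sum^{n}_{i=1}\big|\underline{u_{i}}\big|^{\lambda_{1}}\big|\underline{\nabla u_{i}}\big|^{\lambda_{2}},
\sum^{n}_{i=1}\big|\overline{u_{i}}\big|^{\lambda_{1}}\big|\overline{\nabla u_{i}}\big|^{\lambda_{2}}\Bigg\},
\end{align*}
where the first equality follows from the same parity/monotonicity case analysis as in Theorems~\ref{thm 3.1} and~\ref{thm 3.2}, noting that within each monotone block the relevant bracket endpoints are as computed there, and across blocks the $\max$ over the two bracket endpoints still equals the displayed quantity. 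Then, applying Lemma~\ref{lem 3.01} to each of the two scalar sequences $\{\underline{u_i}\}$ and $\{\overline{u_i}\}$ (both satisfying $\underline{u_0}=\overline{u_0}=0$), we obtain
\begin{align*}
\max\Bigg\{\sum^{n}_{i=1}\big|\underline{u_{i}}\big|^{\lambda_{1}}\big|\underline{\nabla u_{i}}\big|^{\lambda_{2}},
\sum^{n}_{i=1}\big|\overline{u_{i}}\big|^{\lambda_{1}}\big|\overline{\nabla u_{i}}\big|^{\lambda_{2}}\Bigg\}
&\leq\frac{\lambda_{2}(n+1)^{\lambda_{1}}}{\lambda_{1}+\lambda_{2}}
\max\Bigg\{\sum^{n}_{i=1}\big|\underline{\nabla u_{i}}\big|^{\lambda_{1}+\lambda_{2}},
\sum^{n}_{i=1}\big|\overline{\nabla u_{i}}\big|^{\lambda_{1}+\lambda_{2}}\Bigg\}\\
&\leq\frac{\lambda_{2}(n+1)^{\lambda_{1}}}{\lambda_{1}+\lambda_{2}}
\sum^{n}_{i=1}\|\nabla u_{i}\|^{\lambda_{1}+\lambda_{2}},
\end{align*}
which is \eqref{3.2}.

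The hard part, I expect, is the very first equality --- establishing that $\big\|u_{i}^{\lambda_{1}}\big(\nabla u_{i}\big)^{\lambda_{2}}\big\| = \max\{|\underline{u_i}|^{\lambda_1}|\underline{\nabla u_i}|^{\lambda_2},\ |\overline{u_i}|^{\lambda_1}|\overline{\nabla u_i}|^{\lambda_2}\}$ for \emph{every} $i$ in the piecewise setting. Within a single monotone-and-$\mu$-monotone block this is exactly the content of the case tables in the proofs of Theorems~\ref{thm 3.1} and~\ref{thm 3.2}; but at the junction indices $k_t$ (where $u_{k_t}$ lies on the boundary between two blocks) one has to check that the product interval $u_{k_t}^{\lambda_1}(\nabla u_{k_t})^{\lambda_2}$ still has its two defining bracket values equal to $\underline{u_{k_t}}^{\lambda_1}(\underline{\nabla u_{k_t}})^{\lambda_2}$ and $\overline{u_{k_t}}^{\lambda_1}(\overline{\nabla u_{k_t}})^{\lambda_2}$ (possibly with roles swapped), so that taking the norm --- which is the $\max$ of absolute values of the two endpoints --- still returns the claimed quantity regardless of which endpoint is larger. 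Since the norm is insensitive to which bracket endpoint is the min and which is the max, this reduces to the observation that in every case the two bracket endpoints of $u_i^{\lambda_1}(\nabla u_i)^{\lambda_2}$ are (up to sign, absorbed by $|\cdot|$) the pair $\underline{u_i}^{\lambda_1}(\underline{\nabla u_i})^{\lambda_2}$, $\overline{u_i}^{\lambda_1}(\overline{\nabla u_i})^{\lambda_2}$ --- a fact one verifies by the same finite parity check, now carried out blockwise and checked to be consistent at the breakpoints because each breakpoint belongs to a well-defined monotonicity type on at least one adjacent block and the sign pattern of $\nabla u_i$ determines the bracket unambiguously. Once this normalization is in hand, everything else is a direct appeal to Lemma~\ref{lem 3.01}, exactly as above, and the proof is complete.
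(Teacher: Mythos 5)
Your proposal is essentially the paper's own proof: the authors likewise split the sum over the monotonicity blocks, use the parity/monotonicity case tables from Theorems~\ref{thm 3.1} and \ref{thm 3.2} to identify $\|u_i^{\lambda_1}(\nabla u_i)^{\lambda_2}\|$ with $\max\{|\underline{u_i}^{\lambda_1}(\underline{\nabla u_i})^{\lambda_2}|,|\overline{u_i}^{\lambda_1}(\overline{\nabla u_i})^{\lambda_2}|\}$ on each block, recombine, and invoke the scalar Lemma~\ref{lem 3.01} (which needs only $\underline{u_0}=\overline{u_0}=0$). Your interchange $\sum_i\max\{\cdot,\cdot\}=\max\{\sum_i\cdot,\sum_i\cdot\}$ is exactly the step the paper itself performs in the proofs of Theorems~\ref{thm 3.1} and \ref{thm 3.3}, and you are in fact somewhat more explicit than the paper about verifying the norm identity at the block junctions.
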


\begin{proof}
First, suppose that there exists a finite number of points such that
$$
1=i_{0}\leq i_{1}<i_{2}<\cdots <i_{k-1}<i_{k}=n
$$
and $u_{i}$ is piecewise alternate monotone and piecewise alternate $\mu$-monotone.
By Lemma~\ref{lem 3.1}, we have
\begin{align*}
\sum^{n}_{i=1}\Big\|u_{i}^{\lambda_{1}}\big(\nabla u_{i}\big)^{\lambda_{2}}\Big\|
&=\sum^{i_{1}}_{i=i_{0}}\Big\|u_{i}^{\lambda_{1}}\big(\nabla u_{i}\big)^{\lambda_{2}}\Big\|
+\sum^{i_{2}}_{i=i_{1}+1}\Big\|u_{i}^{\lambda_{1}}\big(\nabla u_{i}\big)^{\lambda_{2}}\Big\|
+\cdots +\sum^{i_{k}}_{i=i_{k-1}+1}\Big\|u_{i}^{\lambda_{1}}\big(\nabla u_{i}\big)^{\lambda_{2}}\Big\|\\
&=\sum_{j=0}^{k-1}\sum^{i_{j+1}}_{i=i_{j}}
\max\Big\{\Big|\underline{u_{i}}^{\lambda_{1}}
\big(\underline{\nabla u_{i}}\big)^{\lambda_{2}}\Big|,
\Big|\overline{u_{i}}^{\lambda_{1}}
\big(\overline{\nabla u_{i}}\big)^{\lambda_{2}}\Big|\Big\}\\
&=\sum^{n}_{i=1}\max\Big\{\Big|\underline{u_{i}}^{\lambda_{1}}
\big(\underline{\nabla u_{i}}\big)^{\lambda_{2}}\Big|,
\Big|\overline{u_{i}}^{\lambda_{1}}
\big(\overline{\nabla u_{i}}\big)^{\lambda_{2}}\Big|\Big\}\\
&\leq\frac{\lambda_{2}(n+1)^{\lambda_{1}}}{\lambda_{1}+\lambda_{2}}
\sum^{n}_{i=1}\big\|\nabla u_{i}\big\|^{\lambda_{1}+\lambda_{2}}.
\end{align*}
The proof is complete.
\end{proof}

Similarly, we can also generalize Theorem~\ref{thm 3.2} as follows.

\begin{theorem}
\label{thm 3.4}
Let $\{u_{i}\}_{i=n}^{m}$ be a sequence of intervals, $u_{m}=[0,0]$,
and $\lambda_{1}, \lambda_{2}\geq1$. If $\{u_{i}\}_{i=0}^{m}$ is
piecewise alternate monotone, piecewise alternate $\mu$-monotone,
and there is no other point $i$ such that $u_{i}=[0,0]$, then
\begin{equation}
\sum^{m-1}_{i=n}\Big\|u_{i}^{\lambda_{1}}\big(\nabla u_{i}\big)^{\lambda_{2}}\Big\|
\leq
\frac{\lambda_{2}(m-n+1)^{\lambda_{1}}}{\lambda_{1}+\lambda_{2}}
\sum^{m}_{i=n}\big\|\nabla u_{i}\big\|^{\lambda_{1}+\lambda_{2}}.
\end{equation}
\end{theorem}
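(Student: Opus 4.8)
The plan is to follow the proof of Theorem~\ref{thm 3.3} almost verbatim, with Lemma~\ref{lem 3.02} replacing Lemma~\ref{lem 3.1} and the right endpoint normalization $u_{m}=[0,0]$ replacing the left one $u_{0}=[0,0]$: Theorem~\ref{thm 3.4} stands to Theorem~\ref{thm 3.2} exactly as Theorem~\ref{thm 3.3} stands to Theorem~\ref{thm 3.1}, so the argument should need only two ingredients, a pointwise identity for $\big\|u_{i}^{\lambda_{1}}(\nabla u_{i})^{\lambda_{2}}\big\|$ valid on the whole range $\{n,\dots,m-1\}$, and the scalar estimate behind Lemma~\ref{lem 3.02}.

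For the pointwise identity, I would first fix the break points $n=i_{0}<i_{1}<\cdots<i_{k}=m$ at which the monotonicity or the $\mu$-monotonicity of $\{u_{i}\}$ changes, so that on every block $\{i_{j},i_{j}+1,\dots,i_{j+1}\}$ the sequence is simultaneously monotone and $\mu$-monotone; recall that $u_{i}=[0,0]$ only at $i=m$. On each block the interval $u_{i}^{\lambda_{1}}(\nabla u_{i})^{\lambda_{2}}$ has the explicit form written down, case by case (according to whether $u_{i}$ is increasing or decreasing, $\mu$-increasing or $\mu$-decreasing, and $\lambda_{1},\lambda_{2}$ even or odd), in the proofs of Theorems~\ref{thm 3.1} and~\ref{thm 3.2}; in each case its endpoints are $\underline{u_{i}}^{\lambda_{1}}(\underline{\nabla u_{i}})^{\lambda_{2}}$ and $\overline{u_{i}}^{\lambda_{1}}(\overline{\nabla u_{i}})^{\lambda_{2}}$ in one order or the other, so that
\[
\Big\|u_{i}^{\lambda_{1}}\big(\nabla u_{i}\big)^{\lambda_{2}}\Big\|
=\max\Big\{\big|\underline{u_{i}}^{\lambda_{1}}\big(\underline{\nabla u_{i}}\big)^{\lambda_{2}}\big|,\ \big|\overline{u_{i}}^{\lambda_{1}}\big(\overline{\nabla u_{i}}\big)^{\lambda_{2}}\big|\Big\}
\leq\|u_{i}\|^{\lambda_{1}}\,\|\nabla u_{i}\|^{\lambda_{2}} .
\]
Because every block contributes an expression of the same shape, this holds for all $i\in\{n,\dots,m-1\}$, and summing it recovers the left-hand side of the theorem.

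For the scalar estimate I would pass to the real numbers $\|u_{i}\|$ and $\|\nabla u_{i}\|$. Telescoping $\nabla u_{j}=u_{j}\bm{\ominus_{g}}u_{j-1}$ and using $u_{m}=[0,0]$ gives $|\underline{u_{i}}|\leq\sum_{j=i+1}^{m}\|\nabla u_{j}\|$ and $|\overline{u_{i}}|\leq\sum_{j=i+1}^{m}\|\nabla u_{j}\|$, hence $\|u_{i}\|\leq\sum_{j=i+1}^{m}\|\nabla u_{j}\|$ for $n\leq i\leq m-1$. This is precisely the inequality that drives the proof of Lemma~\ref{lem 3.02}, so running that argument (Young's and H\"{o}lder's inequalities together with an induction on $m-n$) with $\|u_{i}\|$ and $\|\nabla u_{i}\|$ in place of $|u_{i}|$ and $|\nabla u_{i}|$ yields
\[
\sum_{i=n}^{m-1}\|u_{i}\|^{\lambda_{1}}\,\|\nabla u_{i}\|^{\lambda_{2}}
\leq\frac{\lambda_{2}(m-n+1)^{\lambda_{1}}}{\lambda_{1}+\lambda_{2}}\sum_{i=n}^{m}\big\|\nabla u_{i}\big\|^{\lambda_{1}+\lambda_{2}},
\]
and combining this with the previous display completes the proof.

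The step I expect to be the main obstacle is the pointwise identity of the second paragraph. In Theorems~\ref{thm 3.1} and~\ref{thm 3.2} the sequence is one-signed because it issues from (or runs into) the interval $[0,0]$, whereas here the first block and the interior blocks carry no such sign control; one must therefore check carefully that the case tables still apply on those blocks---in particular that $u_{i}^{\lambda_{1}}$ and $(\nabla u_{i})^{\lambda_{2}}$ genuinely have $\underline{u_{i}}^{\lambda_{1}},\overline{u_{i}}^{\lambda_{1}}$ and $(\underline{\nabla u_{i}})^{\lambda_{2}},(\overline{\nabla u_{i}})^{\lambda_{2}}$ as endpoints. This is the only place where the hypotheses of piecewise alternate monotonicity and $\mu$-monotonicity, and the absence of any further zero of $\{u_{i}\}$, are really used; everything else is the bookkeeping of Theorem~\ref{thm 3.3} transcribed to the right-anchored setting.
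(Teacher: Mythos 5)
Your proposal is correct and, in outline, is the argument the paper intends: the paper's proof of Theorem~\ref{thm 3.4} is literally ``analogous to the one of Theorem~\ref{thm 3.3}'', i.e.\ split $\{n,\dots,m\}$ at the points where the monotonicity or the $\mu$-monotonicity changes, use the case tables of Theorems~\ref{thm 3.1} and~\ref{thm 3.2} to see that on each block the endpoints of $u_{i}^{\lambda_{1}}(\nabla u_{i})^{\lambda_{2}}$ are $\underline{u_{i}}^{\lambda_{1}}(\underline{\nabla u_{i}})^{\lambda_{2}}$ and $\overline{u_{i}}^{\lambda_{1}}(\overline{\nabla u_{i}})^{\lambda_{2}}$, and then invoke the scalar Lemma~\ref{lem 3.02}. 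You differ in the final reduction, and your variant is the stronger one. The paper applies the scalar lemma to the two endpoint sequences $\underline{u_{i}}$ and $\overline{u_{i}}$ separately and takes a maximum, which forces it through the step $\sum_{i}\max\{a_{i},b_{i}\}=\max\{\sum_{i}a_{i},\sum_{i}b_{i}\}$ (written as an equality in the proofs of Theorems~\ref{thm 3.1} and~\ref{thm 3.2}); that identity only holds with $\geq$, which is the wrong direction for the chain, so the paper's argument needs repair at that point. You instead bound each term by $\|u_{i}\|^{\lambda_{1}}\|\nabla u_{i}\|^{\lambda_{2}}$, telescope to $\|u_{i}\|\leq\sum_{j=i+1}^{m}\|\nabla u_{j}\|$, and run the scalar Opial argument on the single nonnegative sequence $\|\nabla u_{i}\|$; every step there is a genuine inequality. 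Note also that the obstacle you single out --- whether the case tables still identify the endpoints on interior blocks with no sign control --- is a real difficulty for the paper's route (the termwise identity $\|u_{i}^{\lambda_{1}}(\nabla u_{i})^{\lambda_{2}}\|=\max\{|\underline{u_{i}}^{\lambda_{1}}(\underline{\nabla u_{i}})^{\lambda_{2}}|,|\overline{u_{i}}^{\lambda_{1}}(\overline{\nabla u_{i}})^{\lambda_{2}}|\}$ can fail, and in the dangerous direction, when $0$ is an interior point of $u_{i}$ or of $\nabla u_{i}$, e.g.\ $\|[-1,2]\cdot[-2,1]\|=4>2$), but it is irrelevant for yours: the bound $\|u^{\lambda_{1}}v^{\lambda_{2}}\|\leq\|u\|^{\lambda_{1}}\|v\|^{\lambda_{2}}$ holds for arbitrary intervals by submultiplicativity of the quasi-norm, so your proof in fact needs neither the block decomposition nor the monotonicity hypotheses, only $u_{m}=[0,0]$. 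What the extra hypotheses are meant to buy the paper is termwise equality in the first step; for the stated inequality your shorter route suffices and is the one I would keep.
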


\begin{proof}
The proof is analogous to the one of Theorem~\ref{thm 3.3}.
\end{proof}

As an application of Theorems~\ref{thm 3.3} and \ref{thm 3.4},
we now obtain the following result.

\begin{theorem}
\label{thm 3.5}
Let $\{u_{i}\}_{i=0}^{m}$ be a sequence of intervals, $u_{0}=u_{m}=[0,0]$,
and $\lambda_{1}, \lambda_{2}\geq1$. If $\{u_{i}\}_{i=0}^{m}$ is piecewise
alternate monotone, piecewise alternate $\mu$-monotone, and there is no
other point $i$ such that $u_{i}=[0,0]$, then
\begin{equation}
\label{eq3.5}
\sum^{m-1}_{i=1}\Big\|u_{i}^{\lambda_{1}}\big(\nabla u_{i}\big)^{\lambda_{2}}\Big\|
\leq
\frac{\lambda_{2}\Big(\big[\frac{m}{2}\big]+1\Big)^{\lambda_{1}}}{\lambda_{1}+\lambda_{2}}
\sum^{m}_{i=1}\big\|\nabla u_{i}\big\|^{\lambda_{1}+\lambda_{2}}.
\end{equation}
\end{theorem}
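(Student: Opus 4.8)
The plan is to split the sum $\sum_{i=1}^{m-1}\|u_i^{\lambda_1}(\nabla u_i)^{\lambda_2}\|$ at the midpoint of the index range and apply the two one-sided results already in hand. Since $u_0=u_m=[0,0]$ and there is no other zero, the sequence is "tented" in the sense of the hypotheses of Theorems~\ref{thm 3.3} and \ref{thm 3.4}. First I would treat the left block $i=1,\dots,\big[\frac m2\big]$: this is a sequence starting from $u_0=[0,0]$ of length $\big[\frac m2\big]$, so Theorem~\ref{thm 3.3} (with $n$ there replaced by $\big[\frac m2\big]$) gives
\begin{equation*}
\sum_{i=1}^{\left[\frac m2\right]}\Big\|u_i^{\lambda_1}\big(\nabla u_i\big)^{\lambda_2}\Big\|
\leq
\frac{\lambda_2\big(\big[\frac m2\big]+1\big)^{\lambda_1}}{\lambda_1+\lambda_2}
\sum_{i=1}^{\left[\frac m2\right]}\big\|\nabla u_i\big\|^{\lambda_1+\lambda_2}.
\end{equation*}

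For the right block I would apply Theorem~\ref{thm 3.4} with the roles of the endpoints reversed: set $n=\big[\frac m2\big]$ there, use $u_m=[0,0]$, and note that the index range $\big[\frac m2\big],\dots,m$ has length $m-\big[\frac m2\big]=\big[\frac{m+1}{2}\big]\le\big[\frac m2\big]+1$. This yields
\begin{equation*}
\sum_{i=\left[\frac m2\right]+1}^{m-1}\Big\|u_i^{\lambda_1}\big(\nabla u_i\big)^{\lambda_2}\Big\|
\leq
\frac{\lambda_2\big(m-\big[\frac m2\big]\big)^{\lambda_1}}{\lambda_1+\lambda_2}
\sum_{i=\left[\frac m2\right]+1}^{m}\big\|\nabla u_i\big\|^{\lambda_1+\lambda_2}
\leq
\frac{\lambda_2\big(\big[\frac m2\big]+1\big)^{\lambda_1}}{\lambda_1+\lambda_2}
\sum_{i=\left[\frac m2\right]+1}^{m}\big\|\nabla u_i\big\|^{\lambda_1+\lambda_2}.
\end{equation*}
Adding the two inequalities and recombining the right-hand sums over the full range $i=1,\dots,m$ (the overlap at $i=\big[\frac m2\big]+1$ on the right is harmless since all terms are nonnegative, or one arranges the split so the $\nabla$-sums partition exactly) produces \eqref{eq3.5}.

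The one genuine point to be careful about is the bookkeeping of the index ranges so that the coefficient that comes out is exactly $\big[\frac m2\big]+1$ and not $\big[\frac m2\big]+1$ on one side and something larger on the other: the key arithmetic fact is $\max\{\big[\frac m2\big],\,m-\big[\frac m2\big]\}\le\big[\frac m2\big]+1$, valid for both parities of $m$, which lets the larger of the two block lengths be absorbed into the stated constant. I would also check that each block still satisfies the "piecewise alternate monotone / $\mu$-monotone, no interior zero" hypotheses — this is immediate since restricting a piecewise-alternate-monotone sequence to a subinterval of indices preserves the property, and the only zeros are at the original endpoints $u_0$ and $u_m$. The main (and only) obstacle is thus purely the floor-function case analysis for even versus odd $m$; there is no analytic difficulty beyond what is already contained in Theorems~\ref{thm 3.3} and \ref{thm 3.4}.
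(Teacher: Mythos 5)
Your proposal is correct and follows essentially the same route as the paper: split the sum at $\big[\frac m2\big]$, apply Theorem~\ref{thm 3.3} to the left block and Theorem~\ref{thm 3.4} to the right block, absorb the factor via $m-\big[\frac m2\big]\leq\big[\frac m2\big]+1$, and add. The only quibble is a bookkeeping slip — to obtain the factor $\big(m-\big[\frac m2\big]\big)^{\lambda_1}$ you must take $n=\big[\frac m2\big]+1$ in Theorem~\ref{thm 3.4}, not $n=\big[\frac m2\big]$ — but the displayed inequalities are exactly those of the paper.
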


\begin{proof}
Let us take $n=\big[\frac{m}{2}\big]$. By Theorem~\ref{thm 3.3}, we have
\begin{equation}
\label{eq:inq01}
\sum^{[\frac{m}{2}]}_{i=1}\Big\|u_{i}^{\lambda_{1}}\big(\nabla u_{i}\big)^{\lambda_{2}}\Big\|
\leq
\frac{\lambda_{2}\Big(\big[\frac{m}{2}\big]+1\Big)^{\lambda_{1}}}{\lambda_{1}+\lambda_{2}}
\sum^{[\frac{m}{2}]}_{i=1}\big\|\nabla u_{i}\big\|^{\lambda_{1}+\lambda_{2}}.
\end{equation}
Similarly, by Theorem~\ref{thm 3.4}, we have
\begin{equation}
\label{eq:inq02}
\begin{aligned}
\sum^{m-1}_{i=[\frac{m}{2}]+1}\Big\|u_{i}^{\lambda_{1}}\big(\nabla u_{i}\big)^{\lambda_{2}}\Big\|
&\leq \frac{\lambda_{2}\Big(m-\big[\frac{m}{2}\big]\Big)^{\lambda_{1}}}{\lambda_{1}+\lambda_{2}}
\sum^{m}_{i=[\frac{m}{2}]+1}\big\|\nabla u_{i}\big\|^{\lambda_{1}+\lambda_{2}}\\
&\leq\frac{\lambda_{2}\Big(\big[\frac{m}{2}\big]+1\Big)^{\lambda_{1}}}{\lambda_{1}+\lambda_{2}}
\sum^{m}_{i=[\frac{m}{2}]+1}\big\|\nabla u_{i}\big\|^{\lambda_{1}+\lambda_{2}}.
\end{aligned}
\end{equation}
The intended relation (\ref{eq3.5}) follows by adding the above
two inequalities \eqref{eq:inq01} and \eqref{eq:inq02}.
\end{proof}

\begin{example}
\label{ex 3.3}
Suppose that
$$\{u_{i}\}_{i=0}^{5}=\Big\{[0,0],[1,2],[2,4],[3,6],[1,2],[0,0]\Big\},
\lambda_{1}=2, \lambda_{2}=3.$$
Then, we have
$$
\sum^{4}_{i=1}\Big\|u_{i}^{2}\big(\nabla u_{i}\big)^{3}\Big\|
=704
<6048
=\frac{3\cdot3^{2}}{5}\sum^{4}_{i=1}\big\|\nabla u_{i}\big\|^{5}.
$$
Let
$\lambda_{1}=1, \lambda_{2}=2.$
Then, we have
$$
\sum^{4}_{i=1}\Big\|u_{i}\big(\nabla u_{i}\big)^{2}\Big\|
=80
<184
=\frac{2\cdot3}{3}\sum^{4}_{i=1}\big\|\nabla u_{i}\big\|^{3}.
$$
\end{example}

Now, we give new discrete Opial inequalities involving two interval sequences.

\begin{theorem}
\label{thm 3.6}
Let $\{u_{i}\}_{i=0}^{n}$ and $\{v_{i}\}_{i=0}^{n}$ be two sequences of intervals,
$u_{0}=v_{0}=[0,0]$. If $u_{i}$ and $v_{i}$ are synchronous monotone and $\mu$-increasing,
then
\begin{equation}
\label{eq3.6}
\sum^{n}_{i=1}\big\|u_{i-1}\nabla v_{i} + v_{i}\nabla u_{i}\big\|
\leq
\frac{n}{2}\sum^{n}_{i=1}\big\|(\nabla u_{i})^{2} + (\nabla v_{i})^{2}\big\|.
\end{equation}
\end{theorem}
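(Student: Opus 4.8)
The plan is to reduce \eqref{eq3.6} to a purely scalar inequality by peeling off the interval structure at the endpoints, and then to recognise a telescoping sum. First I would exploit the hypothesis that each of $u_i,v_i$ is monotone and $\mu$-increasing with $u_0=v_0=[0,0]$: in the increasing case this forces $\nabla u_i=[\,\underline{u_i}-\underline{u_{i-1}},\ \overline{u_i}-\overline{u_{i-1}}\,]$ with both endpoints non-negative and $\|\nabla u_i\|=\overline{u_i}-\overline{u_{i-1}}$, so the norms telescope and $\sum_{j=1}^{i-1}\|\nabla u_j\|=\overline{u_{i-1}}=\|u_{i-1}\|$; the decreasing case is identical with lower endpoints. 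The same computation gives $\|v_i\|=\sum_{j=1}^{i}\|\nabla v_j\|$. Moreover $(\nabla u_i)^2$ and $(\nabla v_i)^2$ are then non-negative intervals with right endpoints $\|\nabla u_i\|^2$ and $\|\nabla v_i\|^2$, so their Minkowski sum has right endpoint $\|\nabla u_i\|^2+\|\nabla v_i\|^2$ and thus $\|(\nabla u_i)^2+(\nabla v_i)^2\|=\|\nabla u_i\|^2+\|\nabla v_i\|^2$.

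Writing $a_i=\|\nabla u_i\|$, $b_i=\|\nabla v_i\|$ and $A_k=\sum_{i=1}^k a_i$, $B_k=\sum_{i=1}^k b_i$ (with $A_0=B_0=0$), the subadditivity of the quasi-norm together with the identity $\|u\cdot w\|=\|u\|\,\|w\|$ (immediate from the definition of interval multiplication) yields
$$
\big\|u_{i-1}\nabla v_i+v_i\nabla u_i\big\|\ \le\ \|u_{i-1}\|\,\|\nabla v_i\|+\|v_i\|\,\|\nabla u_i\|\ =\ A_{i-1}b_i+B_i a_i .
$$
The key algebraic point is that $A_{i-1}b_i+B_i a_i=A_iB_i-A_{i-1}B_{i-1}$, so that summing over $i$ collapses the left-hand side of \eqref{eq3.6}:
$$
\sum_{i=1}^{n}\big\|u_{i-1}\nabla v_i+v_i\nabla u_i\big\|\ \le\ \sum_{i=1}^{n}\big(A_iB_i-A_{i-1}B_{i-1}\big)\ =\ A_nB_n .
$$

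It then remains to bound $A_nB_n$. By the Cauchy--Schwarz inequality, $A_n\le\sqrt{n}\big(\sum_{i=1}^n a_i^2\big)^{1/2}$ and $B_n\le\sqrt{n}\big(\sum_{i=1}^n b_i^2\big)^{1/2}$, and then the arithmetic--geometric mean inequality gives
$$
A_nB_n\ \le\ n\Big(\sum_{i=1}^n a_i^2\Big)^{1/2}\Big(\sum_{i=1}^n b_i^2\Big)^{1/2}\ \le\ \frac{n}{2}\sum_{i=1}^n\big(a_i^2+b_i^2\big)\ =\ \frac{n}{2}\sum_{i=1}^n\big\|(\nabla u_i)^2+(\nabla v_i)^2\big\|,
$$
which is exactly \eqref{eq3.6}. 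I do not anticipate a genuine obstacle: the only delicate step is the first-paragraph reduction, where the monotone $\mu$-increasing hypothesis on each sequence is precisely what identifies $\nabla u_i$ with the interval of endpoint differences and hence makes the quasi-norms additive along the sequence; synchronicity of $u_i$ and $v_i$ is not used in the inequality itself (only, presumably, in an accompanying sharpness example). Once the reduction is done, the telescoping identity together with Cauchy--Schwarz finishes the argument routinely.
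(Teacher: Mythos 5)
Your proof is correct, and it arrives at the same pivot as the paper's argument --- the telescoped bound $\sum_{i=1}^{n}\|u_{i-1}\nabla v_{i}+v_{i}\nabla u_{i}\|\le A_{n}B_{n}$ followed by Cauchy--Schwarz and the AM--GM inequality --- but by a genuinely different mechanism. The paper first establishes an exact interval-valued product rule, $u_{i-1}\nabla v_{i}+v_{i}\nabla u_{i}=\nabla(u_{i}v_{i})$, via a case analysis on whether both sequences are increasing or both decreasing (this is precisely where the synchronicity hypothesis is consumed), and then telescopes the endpoints of $\nabla(u_{i}v_{i})$ to reach $\|u_{n}v_{n}\|=\|u_{n}\|\,\|v_{n}\|$. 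You instead pass to scalars at once, using subadditivity and multiplicativity of the quasi-norm to get $\|u_{i-1}\nabla v_{i}+v_{i}\nabla u_{i}\|\le A_{i-1}b_{i}+B_{i}a_{i}=A_{i}B_{i}-A_{i-1}B_{i-1}$, with the monotone-and-$\mu$-increasing hypothesis entering only through the scalar telescoping $\|u_{k}\|=\sum_{j=1}^{k}\|\nabla u_{j}\|$. Your route avoids the endpoint case analysis and, as you correctly observe, never uses synchronicity, so it in fact proves a slightly more general statement; what it gives up is the exact identity $\nabla(u_{i}v_{i})=u_{i-1}\nabla v_{i}+v_{i}\nabla u_{i}$, which is of independent interest. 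The auxiliary facts you invoke --- $\|u\cdot w\|=\|u\|\,\|w\|$, the common sign and ordering of the endpoints of $\nabla u_{i}$ under the hypotheses, and $\|(\nabla u_{i})^{2}+(\nabla v_{i})^{2}\|=\|\nabla u_{i}\|^{2}+\|\nabla v_{i}\|^{2}$ --- all check out.
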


\begin{proof}
Suppose that $u_{i}$ and $v_{i}$ are increasing and $\mu$-increasing. Then,
$u_{i}v_{i}$ is also increasing and $\mu$-increasing. Consequently, we obtain that
\begin{equation*}
\begin{split}
u_{i-1}\nabla v_{i} + v_{i}\nabla u_{i}
&=\Big[\underline{u_{i-1}},\overline{u_{i-1}}\Big]
\cdot \Big[\underline{v_{i}}-\underline{v_{i-1}},\overline{v_{i}}-\overline{v_{i-1}}\Big]
+\Big[\underline{v_{i}},\overline{v_{i}}\Big]
\cdot \Big[\underline{u_{i}}-\underline{u_{i-1}},\overline{u_{i}}-\overline{u_{i-1}}\Big]\\
&=\big[\underline{u_{i-1}}(\underline{v_{i}}-\underline{v_{i-1}})
+\underline{v_{i}}(\underline{u_{i}}-\underline{u_{i-1}}),\
\overline{u_{i-1}}(\overline{v_{i}}-\overline{v_{i-1}})
+\overline{v_{i}}(\overline{u_{i}}-\overline{u_{i-1}})\Big]\\
&=\Big[\underline{u_{i}} \cdot\underline{v_{i}}
-\underline{u_{i-1}}\cdot\underline{v_{i-1}},\
\overline{u_{i}}\cdot\overline{v_{i}}
-\overline{u_{i-1}}\cdot\overline{v_{i-1}}\Big]\\
&=\nabla (u_{i}v_{i}).
\end{split}
\end{equation*}
If $u_{i}$ and $v_{i}$ are decreasing and $\mu$-increasing, then
$u_{i}v_{i}$ is increasing and $\mu$-increasing. Consequently, we obtain that
\begin{align*}
u_{i-1}\nabla v_{i} + v_{i}\nabla u_{i}
&=\Big[\underline{u_{i-1}},\overline{u_{i-1}}\Big]
\cdot \Big[\underline{v_{i}}-\underline{v_{i-1}},\overline{v_{i}}-\overline{v_{i-1}}\Big]
+\Big[\underline{v_{i}},\overline{v_{i}}\Big]
\cdot \Big[\underline{u_{i}}-\underline{u_{i-1}},\overline{u_{i}}-\overline{u_{i-1}}\Big]\\
&=\Big[\overline{u_{i-1}}(\overline{v_{i}}-\overline{v_{i-1}})
+\overline{v_{i}}(\overline{u_{i}}-\overline{u_{i-1}}),\
\underline{u_{i-1}}(\underline{v_{i}}-\underline{v_{i-1}})
+\underline{v_{i}}(\underline{u_{i}}-\underline{u_{i-1}})\Big]\\
&=\Big[\overline{u_{i}}\cdot\overline{v_{i}}
-\overline{u_{i-1}}\cdot\overline{v_{i-1}},\
\underline{u_{i}} \cdot\underline{v_{i}}
-\underline{u_{i-1}}\cdot\underline{v_{i-1}}\Big]\\
&=\nabla (u_{i}v_{i}).
\end{align*}
Then, by the Cauchy--Schwarz inequality, we have
\begin{equation*}
\begin{split}
\sum^{n}_{i=1}\big\|u_{i-1}\nabla v_{i} + v_{i}\nabla u_{i}\big\|
&=\sum^{n}_{i=1}\max\big\{\underline{u_{i}} \cdot\underline{v_{i}}
-\underline{u_{i-1}}\cdot\underline{v_{i-1}},\
\overline{u_{i}}\cdot\overline{v_{i}}
-\overline{u_{i-1}}\cdot\overline{v_{i-1}}\big\}\\
&\leq\max\Bigg\{\sum^{n}_{i=1}\Big(\underline{u_{i}} \cdot\underline{v_{i}}
-\underline{u_{i-1}}\cdot\underline{v_{i-1}}\Big),\
\sum^{n}_{i=1}\Big(\overline{u_{i}}\cdot\overline{v_{i}}
-\overline{u_{i-1}}\cdot\overline{v_{i-1}}\Big)\Bigg\}\\
&\leq\max\big\{\underline{u_{n}}\cdot\underline{v_{n}},\
\overline{u_{n}}\cdot\overline{v_{n}}\big\}\\
&=\|u_{n}v_{n}\|\\
&=\Bigg\|\sum^{n}_{i=1}\nabla u_{i}\Bigg\|
\cdot\Bigg\|\sum^{n}_{i=1}\nabla v_{i}\Bigg\|\\
&\leq \frac{n}{2}\sum^{n}_{i=1}\Big(\big\|\nabla u_{i}\big\|^{2}
+ \big\|\nabla v_{i}\big\|^{2}\Big).
\end{split}
\end{equation*}
This concludes the proof.
\end{proof}

The following results are proved
similarly to Theorem~\ref{thm 3.6}.

\begin{theorem}
\label{thm 3.7}
Let $\{u_{i}\}_{i=0}^{m}$ and $\{v_{i}\}_{i=0}^{m}$ be two sequences of intervals,
$u_{m}=v_{m}=[0,0]$. If $u_{i}$ and $v_{i}$ are synchronous monotone
and $\mu$-decreasing, then
\begin{equation*}
\sum^{m}_{i=n+1}\big\|u_{i-1}\nabla v_{i} + v_{i}\nabla u_{i}\big\|
\leq
\frac{m-n}{2}\sum^{m}_{i=n+1}\big\|(\nabla u_{i})^{2} + (\nabla v_{i})^{2}\big\|.
\end{equation*}
\end{theorem}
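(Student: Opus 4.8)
The plan is to mirror the proof of Theorem~\ref{thm 3.6} exactly, changing only the direction of summation and exploiting the $\mu$-decreasing hypothesis in place of $\mu$-increasing. First I would split into the two monotonicity cases for the pair $(u_i,v_i)$. When both $u_i$ and $v_i$ are increasing and $\mu$-decreasing, the products $\underline{u_i}\,\underline{v_i}$ and $\overline{u_i}\,\overline{v_i}$ still increase with $i$, while the widths decrease, so $u_i v_i$ remains increasing and $\mu$-decreasing; an analogous bookkeeping shows that when both are decreasing and $\mu$-decreasing, $u_i v_i$ is again increasing and $\mu$-decreasing. In either case the key algebraic identity
\begin{equation*}
u_{i-1}\nabla v_i + v_i \nabla u_i = \nabla(u_i v_i)
\end{equation*}
should hold by the same endpoint computation as in Theorem~\ref{thm 3.6}: one expands each gH-difference into endpoint form, collects terms, and recognizes $\underline{u_i}\,\underline{v_i}-\underline{u_{i-1}}\,\underline{v_{i-1}}$ and $\overline{u_i}\,\overline{v_i}-\overline{u_{i-1}}\,\overline{v_{i-1}}$ (possibly with the two endpoints swapped, which is harmless under $\|\cdot\|$).

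Next, with the telescoping identity in hand, I would estimate
\begin{equation*}
\sum_{i=n+1}^{m}\big\|u_{i-1}\nabla v_i + v_i\nabla u_i\big\|
= \sum_{i=n+1}^{m}\big\|\nabla(u_i v_i)\big\|
\leq \Big\|\sum_{i=n+1}^{m}\nabla(u_i v_i)\Big\|
= \big\|u_m v_m \ominus_g u_n v_n\big\|,
\end{equation*}
where the inequality is the triangle inequality for the quasi-norm (equivalently, $\max$ of the two coordinatewise telescoping sums of signed quantities of constant sign). Since $u_m=v_m=[0,0]$, the right side collapses to $\|u_n v_n\|$, and writing $u_n = \ominus_g\sum_{i=n+1}^{m}\nabla u_i$ (valid because the sequence telescopes down to zero at $m$) gives $\|u_n v_n\| = \big\|\sum_{i=n+1}^m \nabla u_i\big\|\cdot\big\|\sum_{i=n+1}^m \nabla v_i\big\|$.

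Finally I would apply the discrete Cauchy--Schwarz inequality to each factor,
\begin{equation*}
\Big\|\sum_{i=n+1}^m \nabla u_i\Big\| \leq \sqrt{m-n}\,\Big(\sum_{i=n+1}^m \|\nabla u_i\|^2\Big)^{1/2},
\end{equation*}
and likewise for $v$, then combine via the AM--GM bound $ab \leq \tfrac12(a^2+b^2)$ to obtain
\begin{equation*}
\|u_n v_n\| \leq (m-n)\Big(\sum_{i=n+1}^m\|\nabla u_i\|^2\Big)^{1/2}\Big(\sum_{i=n+1}^m\|\nabla v_i\|^2\Big)^{1/2}
\leq \frac{m-n}{2}\sum_{i=n+1}^m\big(\|\nabla u_i\|^2+\|\nabla v_i\|^2\big),
\end{equation*}
which is the desired bound once one observes $\|(\nabla u_i)^2+(\nabla v_i)^2\| = \|\nabla u_i\|^2 + \|\nabla v_i\|^2$ for the intervals involved (their squares are intervals with compatible endpoint ordering, so the norm of the sum is the sum of the norms). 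The main obstacle, as in Theorem~\ref{thm 3.6}, is verifying the telescoping identity $u_{i-1}\nabla v_i+v_i\nabla u_i=\nabla(u_iv_i)$ in the $\mu$-decreasing regime: one must check that the specific endpoint assignments produced by $\mu$-decrease still line up so that no spurious $\min/\max$ crossing occurs, and that the product sequence $u_iv_i$ inherits the monotonicity needed for the square-bracket formula for $\nabla(u_iv_i)$ to be valid; the rest is a routine transcription of the scalar Opial argument.
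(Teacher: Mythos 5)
Your proposal is correct and takes essentially the same approach the paper intends: the paper offers no separate proof of Theorem~\ref{thm 3.7}, saying only that it is proved similarly to Theorem~\ref{thm 3.6}, and your argument is precisely that adaptation (endpointwise telescoping identity, collapse to $\|u_n v_n\|$ via $u_m=v_m=[0,0]$, then Cauchy--Schwarz and AM--GM). One small slip worth noting: because $u_m=v_m=[0,0]$ and the sequences are monotone, the relevant endpoints of $u_iv_i$ in fact \emph{decrease} to $0$ rather than increase (e.g.\ for increasing $u_i,v_i$ all endpoints are nonpositive, so $\underline{u_i}\,\underline{v_i}$ and $\overline{u_i}\,\overline{v_i}$ decrease); this is harmless, since the telescoping and the step $\sum_i\max\{\cdot,\cdot\}=\max\{\sum_i\cdot,\sum_i\cdot\}$ (which is not the triangle inequality, as your parenthetical correctly indicates) only require each endpoint sequence of $u_iv_i$ to be monotone in a fixed direction with one coordinate dominating throughout, and that is exactly what your width ($\mu$-monotonicity) computation secures.
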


\begin{theorem}
\label{thm 3.8}
Let $\{u_{i}\}_{i=0}^{m}$ and $\{v_{i}\}_{i=0}^{m}$ be two sequences of intervals,
$u_{0}=v_{0}=[0,0]$. If $\{u_{i}\}_{i=0}^{m}$ is piecewise alternate monotone,
piecewise alternate $\mu$-monotone, and there is no other point $i$
such that $u_{i}=[0,0]$ and $v_{i}=[0,0]$, then
\begin{equation*}
\sum^{n}_{i=1}\big\|u_{i-1}\nabla v_{i} + v_{i}\nabla u_{i}\big\|
\leq \frac{n}{2}\sum^{n}_{i=1}\big\|(\nabla u_{i})^{2} + (\nabla v_{i})^{2}\big\|.
\end{equation*}
\end{theorem}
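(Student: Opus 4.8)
The plan is to run the argument of the proof of Theorem~\ref{thm 3.6} on each maximal monotone block of the sequences, just as Theorem~\ref{thm 3.3} generalizes Theorem~\ref{thm 3.1} by a piecewise argument. First I would use the piecewise alternate monotonicity and piecewise alternate $\mu$-monotonicity hypotheses, together with the assumption that no interior index is a common zero of $u_i$ and $v_i$, to fix indices $0=i_0<i_1<\cdots<i_k=n$ so that on each block $\{i_{j-1},\dots,i_j\}$ the sequences $u_i$ and $v_i$ are synchronous monotone (both increasing or both decreasing) and $\mu$-increasing. On such a block, the two computations carried out in the proof of Theorem~\ref{thm 3.6} (the increasing case and the decreasing case) give, in both instances, the product rule
$$
u_{i-1}\nabla v_i+v_i\nabla u_i=\nabla\!\big(u_iv_i\big),
$$
show that $u_iv_i$ is again increasing and $\mu$-increasing, and therefore yield
$$
\big\|u_{i-1}\nabla v_i+v_i\nabla u_i\big\|=\overline{u_iv_i}-\overline{u_{i-1}v_{i-1}},
$$
the increment of the upper endpoints of the consecutive product intervals.

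Summing over $i=1,\dots,n$ and noting that $\overline{u_iv_i}$ does not depend on which block $i$ lies in, the block-by-block sums telescope globally and collapse the whole left-hand side to $\overline{u_nv_n}-\overline{u_0v_0}=\overline{u_nv_n}$, because $u_0=v_0=[0,0]$. I would then estimate $\overline{u_nv_n}\le\|u_nv_n\|\le\|u_n\|\,\|v_n\|$ by submultiplicativity of the quasi-norm, observe that the same telescoping gives $u_n=\sum_{i=1}^n\nabla u_i$ and $v_n=\sum_{i=1}^n\nabla v_i$, and apply the triangle inequality for $\|\cdot\|$ to obtain
$$
\sum_{i=1}^n\big\|u_{i-1}\nabla v_i+v_i\nabla u_i\big\|\le\Bigg(\sum_{i=1}^n\|\nabla u_i\|\Bigg)\Bigg(\sum_{i=1}^n\|\nabla v_i\|\Bigg).
$$
Applying the Cauchy--Schwarz inequality to each of the two factors and then the arithmetic--geometric mean inequality produces the constant $\frac n2$, and the identity $\|(\nabla u_i)^2+(\nabla v_i)^2\|=\|\nabla u_i\|^2+\|\nabla v_i\|^2$ (valid since $(\nabla u_i)^2$ and $(\nabla v_i)^2$ are non-negative intervals) rewrites the right-hand side in the stated form.

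The main obstacle is the block bookkeeping rather than any new inequality: one has to check that the alternating monotonicity/$\mu$-monotonicity pattern of $u_i$, together with the synchronous behaviour of $v_i$ and the absence of interior common zeros, really does partition $\{0,1,\dots,n\}$ into blocks on each of which one of the two cases of Theorem~\ref{thm 3.6} applies verbatim, and that the upper-endpoint increments telescope across the block boundaries so that only $\overline{u_0v_0}=0$ survives. Once this is set up, every remaining step is either quoted from the proof of Theorem~\ref{thm 3.6} or a routine application of Cauchy--Schwarz and the arithmetic--geometric mean inequality, so I do not expect further difficulties.
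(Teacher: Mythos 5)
The paper offers no proof of Theorem~\ref{thm 3.8} beyond the remark that it ``is proved similarly to Theorem~\ref{thm 3.6}'', so your overall plan --- run the computation of Theorem~\ref{thm 3.6} block by block, in the spirit of the passage from Theorem~\ref{thm 3.1} to Theorem~\ref{thm 3.3} --- is certainly the intended route, and the tail of your argument is sound: $\|(\nabla u_{i})^{2}+(\nabla v_{i})^{2}\|=\|\nabla u_{i}\|^{2}+\|\nabla v_{i}\|^{2}$, the endpointwise triangle inequality $\|u_{n}\|\le\sum_{i=1}^{n}\|\nabla u_{i}\|$ (which is what you actually need, rather than the literal Minkowski identity $u_{n}=\sum\nabla u_{i}$, which fails on $\mu$-decreasing stretches), and Cauchy--Schwarz followed by AM--GM to produce the constant $\tfrac n2$.

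The gap is in the middle step. You propose to choose the blocks so that on each of them $u_{i}$ and $v_{i}$ are synchronous monotone \emph{and $\mu$-increasing}, conclude that every term equals the upper-endpoint increment $\overline{u_{i}v_{i}}-\overline{u_{i-1}v_{i-1}}$, and telescope the whole left-hand side to $\overline{u_{n}v_{n}}$. But the hypothesis is piecewise \emph{alternate} $\mu$-monotonicity: the sequence necessarily switches between $\mu$-increasing and $\mu$-decreasing blocks, so you cannot arrange for every block to be $\mu$-increasing. On a $\mu$-decreasing block the case analysis of Theorem~\ref{thm 3.6} does not apply; the relevant computation is the one behind Theorem~\ref{thm 3.7} (cf.\ \eqref{eq 3.8}--\eqref{eq 3.9}), where the endpoints of $\nabla(u_{i}v_{i})$ are swapped and $\|u_{i-1}\nabla v_{i}+v_{i}\nabla u_{i}\|$ is the modulus of the \emph{lower}-endpoint increment, possibly of a decreasing quantity. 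Hence $\sum_{i}\|u_{i-1}\nabla v_{i}+v_{i}\nabla u_{i}\|$ is a total-variation-type sum of the endpoint sequences of $u_{i}v_{i}$, not the net change $\overline{u_{n}v_{n}}-\overline{u_{0}v_{0}}$: blocks on which the product decreases contribute with positive sign instead of cancelling, and the reduction to $\|u_{n}v_{n}\|=\|u_{n}\|\,\|v_{n}\|$ collapses. (The same phenomenon is what silently validates the step from $\sum_{i}\max\{\cdot,\cdot\}$ to $\max\{\sum\cdot,\sum\cdot\}$ in the proof of Theorem~\ref{thm 3.6}; it is an equality there only because, under the global $\mu$-increasing hypothesis, every term is the nonnegative upper increment.) To close the gap you need, exactly as Lemma~\ref{lem 3.01} rescues Theorem~\ref{thm 3.3}, a scalar lemma with no monotonicity assumption --- a two-sequence discrete Opial inequality of the form $\sum_{i=1}^{n}|a_{i-1}\nabla b_{i}+b_{i}\nabla a_{i}|\le\frac n2\sum_{i=1}^{n}\big(|\nabla a_{i}|^{2}+|\nabla b_{i}|^{2}\big)$ for real sequences with $a_{0}=b_{0}=0$ (cf.\ \cite{H18}) --- applied to each pair of endpoint sequences; telescoping alone does not suffice.
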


\begin{theorem}
\label{thm 3.9}
Let $\{u_{i}\}_{i=0}^{m}$ and $\{v_{i}\}_{i=0}^{m}$ be two sequences of intervals,
$u_{m}=v_{m}=[0,0]$. If $\{u_{i}\}_{i=0}^{m}$ is piecewise alternate monotone,
piecewise alternate $\mu$-monotone, and there is no other point $i$
such that $u_{i}=[0,0]$ and $v_{i}=[0,0]$, then
\begin{equation*}
\sum^{m}_{i=n+1}\big\|u_{i-1}\nabla v_{i} + v_{i}\nabla u_{i}\big\|
\leq
\frac{m-n}{2}\sum^{m}_{i=n+1}\big\|(\nabla u_{i})^{2} + (\nabla v_{i})^{2}\big\|.
\end{equation*}
\end{theorem}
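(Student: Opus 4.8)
The plan is to prove Theorem~\ref{thm 3.9} as the ``right-endpoint-vanishing'' counterpart of Theorem~\ref{thm 3.8}, in exactly the way Theorem~\ref{thm 3.7} is the counterpart of Theorem~\ref{thm 3.6} and Theorem~\ref{thm 3.4} that of Theorem~\ref{thm 3.3}: I would run the argument of Theorem~\ref{thm 3.6}, but carry the telescoping out from $i=m$ backwards, using $u_m=v_m=[0,0]$ in place of $u_0=v_0=[0,0]$, and insert the piecewise-decomposition device already used in Theorem~\ref{thm 3.8} to cope with the hypothesis that $\{u_i\}_{i=0}^m$ is only piecewise alternate monotone and piecewise alternate $\mu$-monotone.

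First I would use that hypothesis to split the index set $\{n,n+1,\dots,m\}$ at the finitely many indices $n=i_0<i_1<\dots<i_k=m$ at which the monotonicity or the $\mu$-monotonicity of $\{u_i\}$ (and, correspondingly, of $\{v_i\}$) changes type, so that on each block $\{i_j,\dots,i_{j+1}\}$ both sequences are monotone and $\mu$-monotone of a fixed type. On each block I would then repeat verbatim the endpoint computations from the proof of Theorem~\ref{thm 3.6} together with their $\mu$-decreasing variant from Theorem~\ref{thm 3.7}: going through the four combinations of (increasing/decreasing)$\times$($\mu$-increasing/$\mu$-decreasing), the interval $u_{i-1}\nabla v_i+v_i\nabla u_i$ turns out to have, up to orientation, endpoints $\underline{u_i}\,\underline{v_i}-\underline{u_{i-1}}\,\underline{v_{i-1}}$ and $\overline{u_i}\,\overline{v_i}-\overline{u_{i-1}}\,\overline{v_{i-1}}$, whence in every case
$$\big\|u_{i-1}\nabla v_i+v_i\nabla u_i\big\|=\max\Big\{\big|\underline{u_i}\,\underline{v_i}-\underline{u_{i-1}}\,\underline{v_{i-1}}\big|,\ \big|\overline{u_i}\,\overline{v_i}-\overline{u_{i-1}}\,\overline{v_{i-1}}\big|\Big\}.$$

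Next I would sum this over $i=n+1,\dots,m$ block by block: on each block the scalar products $\{\underline{u_i}\,\underline{v_i}\}$ and $\{\overline{u_i}\,\overline{v_i}\}$ are monotone, so the block contributions telescope, and the ``no other point $i$ with $u_i=v_i=[0,0]$'' assumption is used to recombine the signed block contributions into a bound by $\|u_nv_n\|$. From there I would close exactly as in Theorem~\ref{thm 3.6}: write $u_n=\sum_{i=n+1}^m\nabla u_i$ and $v_n=\sum_{i=n+1}^m\nabla v_i$ (the backward telescoping on the relevant gH-branch), estimate $\|u_nv_n\|\le\|u_n\|\,\|v_n\|\le\big(\sum_{i=n+1}^m\|\nabla u_i\|\big)\big(\sum_{i=n+1}^m\|\nabla v_i\|\big)$, apply the Cauchy--Schwarz inequality to produce the factor $m-n$ and the elementary inequality $ab\le\tfrac12(a^2+b^2)$ to get $\tfrac{m-n}{2}\sum_{i=n+1}^m\big(\|\nabla u_i\|^2+\|\nabla v_i\|^2\big)$, and finish by noting that $\|(\nabla u_i)^2+(\nabla v_i)^2\|=\|\nabla u_i\|^2+\|\nabla v_i\|^2$ since both $(\nabla u_i)^2$ and $(\nabla v_i)^2$ are intervals contained in $[0,\infty)$.

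The four-case endpoint bookkeeping for $u_{i-1}\nabla v_i+v_i\nabla u_i$ is routine, being in spirit identical to what is already carried out for Theorems~\ref{thm 3.6} and~\ref{thm 3.7}. The point I expect to be the main obstacle is the junction between consecutive monotonicity blocks: at an index $i_j$ the gH-difference may switch branch, so the telescoping of $\underline{u_i}\,\underline{v_i}$ and $\overline{u_i}\,\overline{v_i}$ changes sign there, and one must verify that the hypothesis ruling out further common zeros of $u_i$ and $v_i$ is precisely what is needed to reassemble these signed pieces into the clean estimate by $\|u_nv_n\|$ — that is the only step which is not a word-for-word copy of the globally monotone case.
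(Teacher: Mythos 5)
Your overall plan follows the paper's own (unstated) strategy --- the paper merely remarks that Theorems~\ref{thm 3.7}--\ref{thm 3.10} ``are proved similarly to Theorem~\ref{thm 3.6}'' --- and your endpoint bookkeeping, the use of $u_m=v_m=[0,0]$ for the backward telescoping, and the final Cauchy--Schwarz and $ab\le\tfrac12(a^2+b^2)$ steps are all sound. The genuine gap sits exactly at the step you yourself single out as the main obstacle: the claim that the signed block contributions ``recombine into a bound by $\|u_nv_n\|$.'' They do not. Writing $P_i=\underline{u_i}\,\underline{v_i}$ (and likewise for the upper endpoints), your block-by-block telescoping produces $\sum_j\big|P_{i_{j+1}}-P_{i_j}\big|$, i.e.\ the total variation of $P$ on $\{n,\dots,m\}$, and total variation is not controlled by $|P_n|=|P_n-P_m|$ once global monotonicity fails. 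For instance, with the degenerate intervals $u_n=v_n=[1,1]$, $u_{n+1}=v_{n+1}=[10,10]$, $u_{n+2}=v_{n+2}=[0,0]$ (so $m=n+2$), the left-hand side of the theorem equals $99+100=199$ while $\|u_nv_n\|=1$; the hypothesis excluding further common zeros of $u_i$ and $v_i$ does nothing to prevent this, so no reassembly of the signed pieces can land you back at $\|u_nv_n\|$.

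The repair is to abandon the telescoping-to-$\|u_nv_n\|$ route in the piecewise case and argue pointwise, as in the classical two-sequence discrete Opial inequality: use $\|u_{i-1}\nabla v_i+v_i\nabla u_i\|\le\|u_{i-1}\|\,\|\nabla v_i\|+\|v_i\|\,\|\nabla u_i\|$ (sub-additivity and sub-multiplicativity of the quasi-norm, valid without any monotonicity), then $\|u_{i-1}\|\le\sum_{j=i}^{m}\|\nabla u_j\|$ and $\|v_i\|\le\sum_{j=i+1}^{m}\|\nabla v_j\|$ from $u_m=v_m=[0,0]$, sum over $i=n+1,\dots,m$, and rearrange the resulting double sum to obtain $\big(\sum_{i=n+1}^{m}\|\nabla u_i\|\big)\big(\sum_{i=n+1}^{m}\|\nabla v_i\|\big)$; then finish with Cauchy--Schwarz and the arithmetic--geometric mean inequality exactly as you describe, together with $\|(\nabla u_i)^2+(\nabla v_i)^2\|=\|\nabla u_i\|^2+\|\nabla v_i\|^2$. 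This route also makes the four-case endpoint analysis and the no-common-zero hypothesis inessential to the estimate. In the example above it yields $\tfrac{m-n}{2}\sum\big(\|\nabla u_i\|^2+\|\nabla v_i\|^2\big)=362\ge 199$, confirming that the stated inequality survives, but by this direct argument rather than by the recombination you propose.
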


\begin{theorem}
\label{thm 3.10}
Let $\{u_{i}\}_{i=0}^{m}$ and $\{v_{i}\}_{i=0}^{m}$ be two sequences of intervals,
$u_{1}=v_{1}=[0,0]$, and $u_{m}=v_{m}=[0,0]$.
If $\{u_{i}\}_{i=0}^{m}$ is piecewise alternate monotone,
piecewise alternate $\mu$-monotone, and there is no other point $i$
such that $u_{i}=[0,0]$ and $v_{i}=[0,0]$, then
\begin{equation*}
\sum^{m}_{i=1}\big\|u_{i-1}\nabla v_{i} + v_{i}\nabla u_{i}\big\|
\leq \frac{[\frac{m+1}{2}]}{2}\sum^{m}_{i=1}\big\|(\nabla u_{i})^{2}
+ (\nabla v_{i})^{2}\big\|.
\end{equation*}
\end{theorem}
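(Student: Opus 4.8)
The plan is to argue exactly as in the proof of Theorem~\ref{thm 3.5}, but with the two-sequence statements Theorem~\ref{thm 3.8} and Theorem~\ref{thm 3.9} in the roles of Theorem~\ref{thm 3.3} and Theorem~\ref{thm 3.4}: split the index set $\{1,\dots,m\}$ at a balanced midpoint $n$, estimate the initial block by Theorem~\ref{thm 3.8}, estimate the terminal block by Theorem~\ref{thm 3.9}, and add the resulting inequalities.

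Concretely, I would take $n=\bigl[\frac{m-1}{2}\bigr]$. Since $u_{1}=v_{1}=[0,0]$, shifting indices by one so that the prescribed zero at position $1$ plays the role of the base point $u_{0}=v_{0}=[0,0]$ required by Theorem~\ref{thm 3.8}, and translating back, one obtains
\[
\sum_{i=2}^{n+1}\bigl\|u_{i-1}\nabla v_{i}+v_{i}\nabla u_{i}\bigr\|
\le\frac{n}{2}\sum_{i=2}^{n+1}\bigl\|(\nabla u_{i})^{2}+(\nabla v_{i})^{2}\bigr\|,
\]
while Theorem~\ref{thm 3.9}, applied directly to the block $\{n+2,\dots,m\}$ (its base point being the zero $u_{m}=v_{m}=[0,0]$), gives
\[
\sum_{i=n+2}^{m}\bigl\|u_{i-1}\nabla v_{i}+v_{i}\nabla u_{i}\bigr\|
\le\frac{m-n-1}{2}\sum_{i=n+2}^{m}\bigl\|(\nabla u_{i})^{2}+(\nabla v_{i})^{2}\bigr\|.
\]
The remaining index $i=1$ is handled by hand: $v_{1}=[0,0]$ forces $v_{1}\nabla u_{1}=[0,0]$, and $\nabla u_{1}=[0,0]\,\bm{\ominus_{g}}\,u_{0}$, $\nabla v_{1}=[0,0]\,\bm{\ominus_{g}}\,v_{0}$ have quasi-norms $\|u_{0}\|$ and $\|v_{0}\|$, so submultiplicativity of $\|\cdot\|$ together with the arithmetic--geometric mean inequality yields
\[
\bigl\|u_{0}\nabla v_{1}+v_{1}\nabla u_{1}\bigr\|=\|u_{0}\nabla v_{1}\|\le\|u_{0}\|\,\|v_{0}\|\le\tfrac12\bigl\|(\nabla u_{1})^{2}+(\nabla v_{1})^{2}\bigr\|.
\]

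To conclude, observe that $n=\bigl[\frac{m-1}{2}\bigr]=\bigl[\frac{m+1}{2}\bigr]-1$ and that a short parity check gives $\max\{1,\,n,\,m-n-1\}\le\bigl[\frac{m+1}{2}\bigr]$, so each of the three constants $\tfrac12$, $\tfrac{n}{2}$, $\tfrac{m-n-1}{2}$ is at most $\tfrac12\bigl[\frac{m+1}{2}\bigr]$; adding the three displayed estimates over the partition $\{1\}\cup\{2,\dots,n+1\}\cup\{n+2,\dots,m\}=\{1,\dots,m\}$ produces the asserted inequality. The main obstacle is purely organizational --- lining up the shifted use of Theorem~\ref{thm 3.8} with the unshifted use of Theorem~\ref{thm 3.9} so that the three pieces tile $\{1,\dots,m\}$ with no gap or overlap, and checking the floor estimate $\max\{1,n,m-n-1\}\le\bigl[\frac{m+1}{2}\bigr]$; no analytic input beyond Theorems~\ref{thm 3.8}--\ref{thm 3.9}, the AM--GM inequality, and submultiplicativity of the quasi-norm is needed. (Should the hypothesis be read with $u_{0}=v_{0}=[0,0]$ in place of $u_{1}=v_{1}=[0,0]$, the argument simplifies: take $n=\bigl[\frac{m}{2}\bigr]$, apply Theorem~\ref{thm 3.8} to $\{1,\dots,n\}$ and Theorem~\ref{thm 3.9} to $\{n+1,\dots,m\}$, and add, with no leftover term to treat.)
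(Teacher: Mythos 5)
Your proof is correct and follows the route the paper intends but leaves unwritten: you split $\{1,\dots,m\}$ at a balanced midpoint and combine Theorem~\ref{thm 3.8} on the initial block with Theorem~\ref{thm 3.9} on the terminal block, exactly as the paper's proof of Theorem~\ref{thm 3.5} combines Theorems~\ref{thm 3.3} and~\ref{thm 3.4}. Your separate hand-treatment of the $i=1$ term and the parity check $\max\{1,\,n,\,m-n-1\}\le\big[\frac{m+1}{2}\big]$ with $n=\big[\frac{m-1}{2}\big]$ are sound, and are indeed forced by the stated hypothesis $u_{1}=v_{1}=[0,0]$ (rather than $u_{0}=v_{0}=[0,0]$).
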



\section{Opial type inequalities involving the forward/delta gH-difference operator}
\label{sec:4}

In Section~\ref{sec:3}, we obtained several Opial type inequalities involving
the backward gH-difference operator. Similar arguments can be used to establish
discrete Opial type inequalities concerning the forward gH-difference operator.
The proofs of the results formulated here are left to the interested reader.

\begin{theorem}[delta version of Theorem~\ref{thm 3.1}]
\label{thm 4.1}
Let $\{u_{i}\}_{i=1}^{n}$ be a sequence of intervals, $u_{0}=[0,0]$,
and $\lambda_{1}, \lambda_{2}\geq 1$. If $u_{i}$ is monotone and $\mu$-increasing, then
\begin{equation*}
\sum^{n-1}_{i=0}\Big\|u_{i}^{\lambda_{1}}\big(\Delta u_{i}\big)^{\lambda_{2}}\Big\|
\leq
\frac{\lambda_{2}(n+1)^{\lambda_{1}}}{\lambda_{1}+\lambda_{2}}
\sum^{n-1}_{i=0}\big\|\Delta u_{i}\big\|^{\lambda_{1}+\lambda_{2}}.
\end{equation*}
\end{theorem}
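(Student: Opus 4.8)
The plan is to mirror, step by step, the proof of Theorem~\ref{thm 3.1}, replacing the nabla operator and its summation range by the delta operator and its range. First I would establish a delta-analogue of Lemma~\ref{lem 3.01}: for a sequence $\{u_{i}\}_{i=0}^{n}$ of real numbers with $u_{0}=0$ and $\lambda_{1},\lambda_{2}\geq 1$,
\begin{equation*}
\sum^{n-1}_{i=0}|u_{i}|^{\lambda_{1}}\big|\Delta u_{i}\big|^{\lambda_{2}}
\leq
\frac{\lambda_{2}(n+1)^{\lambda_{1}}}{\lambda_{1}+\lambda_{2}}
\sum^{n-1}_{i=0}\big|\Delta u_{i}\big|^{\lambda_{1}+\lambda_{2}}.
\end{equation*}
This follows from Lemma~\ref{lem 3.01} by the reflection $w_{i}=u_{n-i}$, under which $\Delta u_{i}=u_{i+1}-u_{i}=-(w_{n-i-1}-w_{n-i})=-\nabla w_{n-i}$, so $|\Delta u_{i}|=|\nabla w_{n-i}|$, while $w_{0}=u_{n}$ need not vanish — hence I would instead invoke the version of Lemma~\ref{lem 3.02} (with the roles of the two endpoints swapped), or simply note that the estimate $|u_{i}|\leq\sum_{j=0}^{i-1}|\Delta u_{j}|$ coming from $u_{0}=0$ lets the same induction-on-$n$ argument as in Lemma~\ref{lem 3.1} go through verbatim with $\nabla$ replaced by $\Delta$ and summation indices shifted down by one.

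Next I would reduce the interval statement to this scalar statement exactly as in the proof of Theorem~\ref{thm 3.1}. The hypothesis that $u_{i}$ is monotone and $\mu$-increasing forces $\Delta u_{i}=u_{i+1}\bm{\ominus_{g}}u_{i}$ to have endpoints $\underline{\Delta u_{i}}=\underline{u_{i+1}}-\underline{u_{i}}$ and $\overline{\Delta u_{i}}=\overline{u_{i+1}}-\overline{u_{i}}$ (the $\mu$-increasing condition guarantees this ordering). Then, performing the same case analysis on the parities of $\lambda_{1}$ and $\lambda_{2}$ as in Theorem~\ref{thm 3.1}, the interval $u_{i}^{\lambda_{1}}(\Delta u_{i})^{\lambda_{2}}$ has endpoints $\underline{u_{i}}^{\lambda_{1}}(\underline{\Delta u_{i}})^{\lambda_{2}}$ and $\overline{u_{i}}^{\lambda_{1}}(\overline{\Delta u_{i}})^{\lambda_{2}}$ in some order, whence
\begin{equation*}
\Big\|u_{i}^{\lambda_{1}}\big(\Delta u_{i}\big)^{\lambda_{2}}\Big\|
=\max\Big\{\big|\underline{u_{i}}^{\lambda_{1}}(\underline{\Delta u_{i}})^{\lambda_{2}}\big|,\
\big|\overline{u_{i}}^{\lambda_{1}}(\overline{\Delta u_{i}})^{\lambda_{2}}\big|\Big\}.
\end{equation*}

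Finally I would sum over $i=0,\dots,n-1$, pull the max outside the sum, apply the scalar delta-lemma to each of the two component sequences $\{\underline{u_{i}}\}$ and $\{\overline{u_{i}}\}$ (both starting at $0$ since $u_{0}=[0,0]$), and then use $\max\{\sum a_{i},\sum b_{i}\}\leq\sum\max\{a_{i},b_{i}\}$ together with $\max\{|\underline{\Delta u_{i}}|^{\lambda_{1}+\lambda_{2}},|\overline{\Delta u_{i}}|^{\lambda_{1}+\lambda_{2}}\}=\|\Delta u_{i}\|^{\lambda_{1}+\lambda_{2}}$ to arrive at the claimed bound. The only genuine subtlety — and the step I expect to require the most care — is the preliminary scalar lemma: one must confirm that the induction in Lemma~\ref{lem 3.1} really is insensitive to whether the "zero" endpoint sits at index $0$ (as for $\nabla$) or is reached from below by $\Delta$, i.e. that the telescoping $u_{i}=\sum_{j=0}^{i-1}\Delta u_{j}$ plays the same role as $u_{i}=\sum_{j=1}^{i}\nabla u_{j}$ did; once that is checked, everything else is the same routine bookkeeping already carried out in Section~\ref{sec:3}.
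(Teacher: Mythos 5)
Your proposal is correct and is essentially what the paper intends: for Theorem~\ref{thm 4.1} the authors give no written proof, stating only that the arguments of Section~\ref{sec:3} carry over to the delta operator, and your plan (a delta analogue of Lemma~\ref{lem 3.01} obtained either by the reflection onto Lemma~\ref{lem 3.02} or by rerunning the induction with $u_{i}=\sum_{j=0}^{i-1}\Delta u_{j}$, followed by the same endpoint/parity case analysis and max-over-components argument as in Theorem~\ref{thm 3.1}) is precisely that transcription. The one subtlety you flag is real but harmless: since $u_{i}$ omits the term $\Delta u_{i}$ from its telescoping sum, the delta estimate is in fact weaker than the corresponding nabla one, so the induction (or the reduction to the nabla lemmas) goes through with the same constant.
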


\begin{theorem}[delta version of Theorem~\ref{thm 3.2}]
\label{thm 4.2}
Let $\{u_{i}\}_{i=0}^{m}$ be a sequence of intervals, $u_{m}=[0,0]$,
and $\lambda_{1}, \lambda_{2}\geq1$.
If $u_{i}$ is monotone and $\mu$-decreasing, then
\begin{equation*}
\sum^{m-1}_{i=n}\Big\|u_{i}^{\lambda_{1}}\big(\Delta u_{i}\big)^{\lambda_{2}}\Big\|
\leq \frac{\lambda_{2}(m-n+1)^{\lambda_{1}}}{\lambda_{1}+\lambda_{2}}
\sum^{m}_{i=n}\big\|\Delta u_{i}\big\|^{\lambda_{1}+\lambda_{2}}.
\end{equation*}
\end{theorem}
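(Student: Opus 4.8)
The plan is to mirror, \emph{mutatis mutandis}, the argument already given for Theorem~\ref{thm 3.2}, replacing the backward operator $\nabla$ by the forward operator $\Delta$ throughout. First I would record the scalar analogue of Lemma~\ref{lem 3.02} for the $\Delta$-operator: if $\{w_i\}_{i=n}^m$ is a sequence of real numbers with $w_m=0$, then, since $\Delta w_i=w_{i+1}-w_i$, one has $w_i=-\sum_{j=i}^{m-1}\Delta w_j$, hence $|w_i|\le\sum_{j=i}^{m-1}|\Delta w_j|$, and the same induction as in Lemma~\ref{lem 3.1} (using Young's and H\"older's inequalities) yields
\begin{equation*}
\sum_{i=n}^{m-1}|w_i|^{\lambda_1}\big|\Delta w_i\big|^{\lambda_2}
\le\frac{\lambda_2(m-n+1)^{\lambda_1}}{\lambda_1+\lambda_2}
\sum_{i=n}^{m-1}\big|\Delta w_i\big|^{\lambda_1+\lambda_2}.
\end{equation*}
This is the delta counterpart of Lemma~\ref{lem 3.02}, and it is the only genuinely new ingredient.

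Next I would carry out the case analysis on the parity of $\lambda_1,\lambda_2$ and on the monotonicity type, exactly as in the proof of Theorem~\ref{thm 3.2}. Since $\nabla u_i = u_i\bm{\ominus_g}u_{i-1}$ and $\Delta u_i = u_{i+1}\bm{\ominus_g}u_i$ have the same structural form, $\mu$-decreasing and monotone data force $\Delta u_i=\big[\underline{u_{i+1}}-\underline{u_i},\overline{u_{i+1}}-\overline{u_i}\big]$ or its reversal, and the products $u_i^{\lambda_1}(\Delta u_i)^{\lambda_2}$ have endpoints $\underline{u_i}^{\lambda_1}(\underline{\Delta u_i})^{\lambda_2}$ and $\overline{u_i}^{\lambda_1}(\overline{\Delta u_i})^{\lambda_2}$ in some order. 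Applying the quasi-norm, $\big\|u_i^{\lambda_1}(\Delta u_i)^{\lambda_2}\big\|=\max\{|\underline{u_i}^{\lambda_1}(\underline{\Delta u_i})^{\lambda_2}|,|\overline{u_i}^{\lambda_1}(\overline{\Delta u_i})^{\lambda_2}|\}$, then pulling the $\max$ outside the sum, invoking the delta lemma above to each of the two scalar sums $\sum|\underline{u_i}|^{\lambda_1}|\underline{\Delta u_i}|^{\lambda_2}$ and $\sum|\overline{u_i}|^{\lambda_1}|\overline{\Delta u_i}|^{\lambda_2}$ (note $\underline{u}_m=\overline{u}_m=0$ since $u_m=[0,0]$), and finally using $\max\{\sum a_i,\sum b_i\}\le\sum\max\{a_i,b_i\}$ together with $\max\{|\underline{\Delta u_i}|^{\lambda_1+\lambda_2},|\overline{\Delta u_i}|^{\lambda_1+\lambda_2}\}=\|\Delta u_i\|^{\lambda_1+\lambda_2}$, gives the claimed bound.

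I do not expect any real obstacle here: the only subtlety is bookkeeping on the index ranges. In the delta setting the "endpoint zero" condition $u_m=[0,0]$ is attached at the \emph{right} end, so one telescopes $u_i$ backwards from $m$, and the left-hand sum naturally runs over $i=n,\dots,m-1$ while the right-hand sum runs over $i=n,\dots,m-1$ as well (or $i=n,\dots,m$, since the extra term $\|\Delta u_m\|$ is simply a nonnegative addition that only weakens the inequality, matching the statement's upper limit $m$). I would double-check that the factor $(m-n+1)^{\lambda_1}$ is exactly what the delta lemma produces over this range, which it does. The remaining results of Section~\ref{sec:4} (the delta analogues of Theorems~\ref{thm 3.3}--\ref{thm 3.10}) then follow by the same partition-into-monotone-pieces trick and the same pairing of a "left" and a "right" estimate used for Theorem~\ref{thm 3.5}, so no further ideas are needed.
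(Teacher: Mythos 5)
Your proposal is correct and is exactly the approach the paper intends: Section~\ref{sec:4} explicitly leaves these proofs to the reader as delta analogues of the Section~\ref{sec:3} arguments, and you supply precisely the one needed ingredient (the delta counterpart of Lemma~\ref{lem 3.02}, obtained by telescoping from the right endpoint $u_{m}=[0,0]$) together with the same parity/monotonicity case analysis and quasi-norm estimate used for Theorem~\ref{thm 3.2}. As a small shortcut, your delta lemma also follows immediately from Lemma~\ref{lem 3.01} via the reflection $v_{j}=u_{m-j}$, which turns $\Delta$ into $-\nabla$, so no new induction is actually required.
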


\begin{theorem}[delta version of Theorem~\ref{thm 3.5}]
\label{thm 4.5}
Let $\{u_{i}\}_{i=0}^{m}$ be a sequence of intervals, $u_{0}=u_{m}=[0,0]$,
and $\lambda_{1},\ \lambda_{2}\geq1$. If $\{u_{i}\}_{i=0}^{m}$ is piecewise alternate monotone,
piecewise alternate $\mu$-monotone, and there is no other point $i$ such that $u_{i}=[0,0]$, then
\begin{equation*}
\sum^{m-1}_{i=1}\Big\|u_{i}^{\lambda_{1}}\big(\Delta u_{i}\big)^{\lambda_{2}}\Big\|
\leq \frac{\lambda_{2}\Big(\big[\frac{m}{2}\big]+1\Big)^{\lambda_{1}}}{\lambda_{1}+\lambda_{2}}
\sum^{m}_{i=1}\big\|\Delta u_{i}\big\|^{\lambda_{1}+\lambda_{2}}.
\end{equation*}
\end{theorem}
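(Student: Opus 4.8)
The plan is to mirror the proof of Theorem~\ref{thm 3.5}, simply replacing the nabla operator by the delta operator throughout and invoking the delta-analogues of the results from Section~\ref{sec:3} (namely Theorem~\ref{thm 4.1} and the delta version of Theorem~\ref{thm 3.4}) in place of Theorems~\ref{thm 3.3} and \ref{thm 3.4}. First I would set $n=\big[\frac{m}{2}\big]$ and split the sum $\sum_{i=1}^{m-1}\big\|u_i^{\lambda_1}(\Delta u_i)^{\lambda_2}\big\|$ at this index into $\sum_{i=1}^{[m/2]}$ and $\sum_{i=[m/2]+1}^{m-1}$. The hypotheses guarantee that $u_0=[0,0]$ and $u_m=[0,0]$ with no other vanishing point, so each of the two blocks satisfies the hypotheses of the relevant one-sided delta inequality.

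For the first block, since $u_0=[0,0]$ and the sequence is piecewise alternate monotone and piecewise alternate $\mu$-monotone, I would apply Theorem~\ref{thm 4.1} on the truncated sequence $\{u_i\}_{i=0}^{[m/2]}$ to obtain
\begin{equation*}
\sum^{[\frac{m}{2}]}_{i=1}\Big\|u_{i}^{\lambda_{1}}\big(\Delta u_{i}\big)^{\lambda_{2}}\Big\|
\leq
\frac{\lambda_{2}\Big(\big[\frac{m}{2}\big]+1\Big)^{\lambda_{1}}}{\lambda_{1}+\lambda_{2}}
\sum^{[\frac{m}{2}]-1}_{i=0}\big\|\Delta u_{i}\big\|^{\lambda_{1}+\lambda_{2}}.
\end{equation*}
For the second block, I would apply the delta analogue of Theorem~\ref{thm 3.4} (equivalently, a reindexed delta version of Theorem~\ref{thm 4.2}) on $\{u_i\}_{i=[m/2]+1}^{m}$, using $u_m=[0,0]$, which yields a bound with the factor $\big(m-\big[\frac{m}{2}\big]\big)^{\lambda_1}$. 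Since $m-\big[\frac{m}{2}\big]\le\big[\frac{m}{2}\big]+1$ for every integer $m$, this factor can be enlarged to $\big(\big[\frac{m}{2}\big]+1\big)^{\lambda_1}$, so that
\begin{equation*}
\sum^{m-1}_{i=[\frac{m}{2}]+1}\Big\|u_{i}^{\lambda_{1}}\big(\Delta u_{i}\big)^{\lambda_{2}}\Big\|
\leq
\frac{\lambda_{2}\Big(\big[\frac{m}{2}\big]+1\Big)^{\lambda_{1}}}{\lambda_{1}+\lambda_{2}}
\sum^{m-1}_{i=[\frac{m}{2}]}\big\|\Delta u_{i}\big\|^{\lambda_{1}+\lambda_{2}}.
\end{equation*}

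Adding the two displayed inequalities, the right-hand sides combine into $\frac{\lambda_{2}([\frac{m}{2}]+1)^{\lambda_1}}{\lambda_1+\lambda_2}\sum_{i=0}^{m-1}\|\Delta u_i\|^{\lambda_1+\lambda_2}$, which is exactly the desired bound (noting that the index ranges $0,\dots,[m/2]-1$ and $[m/2],\dots,m-1$ tile $\{0,\dots,m-1\}$ without overlap). The only mild obstacle is bookkeeping the delta index shifts correctly — in the delta setting the natural summation range for $\Delta u_i$ runs over $i=0,\dots,n-1$ rather than $i=1,\dots,n$, so one must be careful that splitting at $[m/2]$ produces the ranges $\{0,\dots,[m/2]-1\}$ and $\{[m/2],\dots,m-1\}$ and that the $\mu$-monotonicity and sign structure on each piece are inherited from the global hypotheses exactly as in the proofs of Theorems~\ref{thm 3.1} and \ref{thm 3.3}. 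Everything else is the elementary integer inequality $m-\big[\frac{m}{2}\big]\le\big[\frac{m}{2}\big]+1$ together with term-by-term addition, so no genuinely new idea beyond the delta translation of Section~\ref{sec:3} is required.
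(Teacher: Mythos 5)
Your overall strategy --- split at $n=\big[\frac{m}{2}\big]$, apply a one-sided delta inequality to each half, and absorb the factor via $m-\big[\frac{m}{2}\big]\le\big[\frac{m}{2}\big]+1$ --- is exactly the route the paper intends: Section~4 leaves the proof to the reader, and the nabla proof of Theorem~\ref{thm 3.5} is the template you are transposing. However, your first displayed inequality is false as written, and the failure is precisely the delta index shift you flag but then resolve on the wrong side. The term $i=\big[\frac{m}{2}\big]$ of its left-hand side contains $\Delta u_{[m/2]}=u_{[m/2]+1}\ominus_{g}u_{[m/2]}$, which reaches into the second half of the sequence and does not occur anywhere in your right-hand sum $\sum_{i=0}^{[m/2]-1}\big\|\Delta u_{i}\big\|^{\lambda_{1}+\lambda_{2}}$. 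Concretely, take $m=3$, $\lambda_{1}=\lambda_{2}=1$ and the degenerate sequence $u_{0}=[0,0]$, $u_{1}=[\varepsilon,\varepsilon]$, $u_{2}=[M,M]$, $u_{3}=[0,0]$ (which satisfies all hypotheses): here $\big[\frac{m}{2}\big]=1$ and your first block asserts $\varepsilon(M-\varepsilon)\le\varepsilon^{2}$, which fails whenever $M>2\varepsilon$. So that intermediate step cannot be obtained from Theorem~\ref{thm 4.1}, nor is it true.

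The repair is a one-line reindexing: assign the boundary term to the second block, i.e.\ split $\sum_{i=1}^{m-1}=\sum_{i=0}^{[m/2]-1}+\sum_{i=[m/2]}^{m-1}$ (the $i=0$ term vanishes because $u_{0}=[0,0]$). Then the piecewise delta analogue of Theorem~\ref{thm 3.3} with $n=\big[\frac{m}{2}\big]$ bounds the first piece by $\frac{\lambda_{2}([\frac{m}{2}]+1)^{\lambda_{1}}}{\lambda_{1}+\lambda_{2}}\sum_{i=0}^{[m/2]-1}\big\|\Delta u_{i}\big\|^{\lambda_{1}+\lambda_{2}}$, the delta analogue of Theorem~\ref{thm 3.4} bounds the second piece by $\frac{\lambda_{2}(m-[\frac{m}{2}])^{\lambda_{1}}}{\lambda_{1}+\lambda_{2}}\sum_{i=[m/2]}^{m-1}\big\|\Delta u_{i}\big\|^{\lambda_{1}+\lambda_{2}}$, and now the two right-hand index ranges genuinely tile $\{0,\dots,m-1\}$, so your integer inequality finishes the argument. (As your final remark implicitly concedes, the proof then delivers the right-hand sum $\sum_{i=0}^{m-1}\big\|\Delta u_{i}\big\|^{\lambda_{1}+\lambda_{2}}$ rather than the $\sum_{i=1}^{m}$ printed in the statement, which would involve the undefined $\Delta u_{m}$; that range is evidently a typo carried over from the nabla version.)
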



\section{Conclusions}
\label{sec:5}

We investigated discrete Opial type inequalities for interval-valued functions,
and obtained several new interval discrete Opial type inequalities.
Our results generalize many known discrete Opial type inequalities,
and will be useful in developing the theory of interval difference
inequalities and interval difference equations.
As future research directions, we intend to investigate
interval discrete Opial type inequalities on time scales,
and give some applications to interval difference equations.


\section*{Acknowledgements}

This research is supported by Philosophy and Social Sciences 
of Educational Commission of Hubei Province of China (22Y109), 
Foundation of Hubei Normal University (2022055), and Key Projects 
of Educational Commission of Hubei Province of China (D20192501).
Torres is supported by the Portuguese Foundation for Science 
and Technology (FCT), project UIDB/04106/2020 (CIDMA).



\end{document}